\pgfplotsset{width=9cm,compat=1.16}
\theoremstyle{plain}
\newtheorem{thm}{Theorem}[section]
\newtheorem{lemma}[thm]{Lemma}
\newtheorem{prop}[thm]{Proposition}
\newtheorem{problem}[thm]{Problem}
\theoremstyle{definition}
\theoremstyle{remark}
\newtheorem*{rem*}{Remark}
\newtheorem{rem}{Remark}[section]
\newcommand{\eps}{\varepsilon}
\newcommand{\R}{\mathbb{R}}
\newcommand{\N}{\mathbb{N}}
\renewcommand{\leq}{\leqslant}
\renewcommand{\geq}{\geqslant}
\DeclareMathOperator{\dist}{dist}
\DeclareMathOperator{\supp}{supp}
\newcommand{\Dn}{{D_{\Omega}^{1/2}}}
\def\({\left(}
\def\){\right)}
\def\[{\left[}
\def\]{\right]}
\def\<{\langle}
\def\>{\rangle}
\title{Qualitative properties of free boundaries for the exterior Bernoulli problem for the half Laplacian.}
\author{
\ Sven Jarohs\footnote{Goethe-Universit\"at Frankfurt, Germany, jarohs@math.uni-frankfurt.de.},
Tadeusz Kulczycki\footnote{Wroclaw University of Science and Technology, Poland, tadeusz.kulczycki@pwr.edu.pl},
Paolo Salani\footnote{DiMaI, Universit\`a di Firenze, Italy, paolo.salani@unifi.it}
}
\date{}
\begin{document}
\maketitle

\begin{abstract} 
In this work, we study the asymptotic behavior of the free boundary of the solution to the exterior Bernoulli problem for the half Laplacian when the Bernoulli's gradient parameter tends to $0^+$ and to $+\infty$. Moreover, we show that, under suitable conditions, the perpendicular rays of the free boundary always meet the convex envelope of the fixed boundary.
\end{abstract}

\noindent \textbf{Keywords:} free boundary problems, fractional Laplacian, moving planes method, starshapedness.

\noindent \textbf{AMS Subject Classification 2020:} 35R35, 35S99.

\section{Introduction}

The exterior Bernoulli free boundary problem for the half Laplacian is formulated as follows. 
\begin{problem}
\label{bernoulli_problem}
Given $d \in \N$, a bounded domain $K \subset \R^d$ and a constant $\lambda > 0$, we look for a continuous function $u: \R^d \to [0,1]$ and a domain $\Omega \supset \overline{K}$ of class $C^1$ satisfying
\begin{equation*}
\left\{
\begin{aligned}
(-\Delta)^{1/2}u(x)&=0 &&\text{for $x \in \Omega \setminus \overline{K}$,}\\
u(x)&=1 &&\text{for $x \in \overline{K}$,}\\
u(x)&=0 &&\text{for $x \in \Omega^c$,}\\
\Dn u(x) &= \lambda&& \text{for $x \in \partial \Omega$.}
\end{aligned}
\right.
\end{equation*}
\end{problem}
Here, $(-\Delta)^{1/2}$ denotes the half Laplacian given by
$$
(-\Delta)^{1/2}f(x) = \frac{\Gamma\left(\frac{d+1}{2}\right)}{2 \pi^{\frac{d+1}{2}}}\int_{\R^d}\frac{2 f(x) - f(x+z) - f(x-z)}{|z|^{d+1}} \, dz
$$
and $\Dn$ denotes the generalized normal derivative given by
$$
\Dn u(x) = \lim_{t \to 0^+} \frac{u(x + t n(x)) - u(x)}{t^{1/2}} = \lim_{t \to 0^+} \frac{u(x + t n(x))}{t^{1/2}},
$$
where $x\in \partial \Omega$ and $n(x)$ is the inward unit normal vector to $\Omega$ at $x$. As usual, by a domain we understand a nonempty, connected open set; by a domain of class $C^k$ for some $k\in \N$ we understand a domain whose boundary is locally a graph of a $C^k$ function. Given $r>0$ and $\xi\in\R^d$, we denote by $B_r(\xi)$ the open ball of radius $r$ centered at $\xi$. Since we will widely use the notion of starshapedness, although it is quite standard, let us recall it now: we say that a domain $A$ is {\em starshaped with respect to a point $\xi\in A$} if $\mu(A-\xi)+\xi\subseteq A$ for every $\mu\in[0,1]$ (i.e., if for every $x\in A$ the whole segment joining $\xi$ to $x$ is contained in $A$). Furthermore, we say that $A$ is starshaped with respect to the set $B\subseteq A$ if it is starhaped with respect to every point $\xi\in B$.

The study of classical Bernoulli free boundary problems has a long history, which started with the pioneering work by \cite{B1957}. The fractional version has been introduced in \cite{CRS2010}, where the authors studied the regularity of the free boundary. Since then, the study of the regularity of the free boundary in the nonlocal case has attracted great attention, see e.g. \cite{AR2020, AP2012, DSS2015, DSS2015b, SSS2014, DPTV23, DSV2015, EFY2023, EKPSS2019, FR2024, KW2023, RS2014, RW2024a, RW2024b, ST2024}. By assuming further geometric properties of the fixed boundary $\partial K$ in Problem \ref{bernoulli_problem}, these properties carry over to the solution $(u,\Omega)$. In \cite{JKS2022} (see Theorem 1.6 and Proposition 2.11) the following result is proven.

\begin{thm}[Theorem 1.6 and Proposition 2.11 in \cite{JKS2022}]
\label{Jarohs_Kulczycki_Salani}
Assume that the bounded domain $K \subset \R^d$ has a $C^2$ boundary and it is starshaped with respect to a ball $B_r(x_0)$ for some $r > 0$ and $x_0 \in K$. Then for any $\lambda > 0$ there exists a unique solution $u_{\lambda}$, $\Omega_{\lambda}$ of Problem \ref{bernoulli_problem}. Moreover, $\Omega_{\lambda}$ is bounded, starshaped with respect to $B_r(x_0)$ and it is of class $C^{\infty}$.
\end{thm}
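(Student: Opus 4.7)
The plan is to treat existence, uniqueness, and the geometric properties of $\Omega_\lambda$ together, exploiting the scaling behavior of the half Laplacian. It is convenient to lift to a local problem via the Caffarelli--Silvestre extension: an admissible $u$ extends harmonically to $\R^{d+1}_+$, the half Laplacian is recovered from the normal flux at $y=0$, and the condition $\Dn u=\lambda$ on $\partial\Omega$ translates into a Bernoulli-type gradient condition on the thin set $\partial\Omega\times\{0\}$. This puts the problem within reach of the Alt--Caffarelli variational theory and of the fractional regularity results already cited in the introduction.

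For existence I would minimize a suitable Alt--Caffarelli-type fractional energy over admissible domains $\Omega\supset\overline{K}$, obtaining a pair $(u_\lambda,\Omega_\lambda)$ that satisfies the PDE and the condition $\Dn u_\lambda=\lambda$ in a weak/viscosity sense. The cited regularity results then give $\partial\Omega_\lambda\in C^{1,\alpha}$; once starshapedness is established the free boundary is globally Lipschitz, so a standard fractional Schauder bootstrap improves this to $C^\infty$. Boundedness of $\Omega_\lambda$ comes from comparing $u_\lambda$ from outside with an explicit half Laplacian solution on a sufficiently large annulus enclosing $K$.

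The core step, and the hardest one, is to show that $\Omega_\lambda$ inherits starshapedness with respect to $B_r(x_0)$ from $K$. Fix $\xi\in B_r(x_0)$ and, for $\mu\in(0,1)$, set
\[
\Omega_\mu:=\mu(\Omega_\lambda-\xi)+\xi,\qquad u_\mu(x):=u_\lambda\bigl(\xi+(x-\xi)/\mu\bigr).
\]
Starshapedness of $K$ with respect to $\xi$ gives $\mu(K-\xi)+\xi\subseteq K$, so $u_\mu\leq 1=u_\lambda$ on $\overline{K}$ and $u_\mu=0$ outside $\Omega_\mu$. A direct computation based on the $1$-homogeneity of $(-\Delta)^{1/2}$ yields $(-\Delta)^{1/2}u_\mu=0$ on $\Omega_\mu\setminus\overline{K}$, while the generalized normal derivative scales as $\Dn u_\mu=\mu^{-1/2}\lambda>\lambda$ on $\partial\Omega_\mu$. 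I would now perform a continuous dilation: for $\mu$ small enough $\Omega_\mu\Subset\Omega_\lambda$, so $u_\mu\leq u_\lambda$ on $\R^d$ by the maximum principle for the half Laplacian; then slide $\mu$ toward $1$ and let $\mu^\ast$ be the first value of loss of strict inclusion. At $\mu^\ast$ either $\partial\Omega_{\mu^\ast}$ touches $\partial\Omega_\lambda$ tangentially or $u_{\mu^\ast}-u_\lambda$ vanishes at an interior point of the common open set; in both cases the nonlocal strong maximum principle combined with a Hopf-type comparison for $\Dn$ clashes with the strict inequality $(\mu^\ast)^{-1/2}\lambda>\lambda$, forcing $\mu^\ast=1$. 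This yields $\Omega_\mu\subseteq\Omega_\lambda$ for every $\mu\in(0,1)$, i.e., starshapedness with respect to each $\xi\in B_r(x_0)$. Uniqueness comes for free by running the same dilation between two putative solutions $(u_\lambda,\Omega_\lambda)$ and $(u'_\lambda,\Omega'_\lambda)$.

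The main obstacle I anticipate is the nonlocal Hopf/boundary comparison needed at the contact step: because both $(-\Delta)^{1/2}$ and $\Dn$ depend on global data, the local inequality $u_{\mu^\ast}\leq u_\lambda$ near a contact point is not enough by itself to order the generalized normal derivatives. Closing the argument requires the global ordering $u_{\mu^\ast}\leq u_\lambda$ on all of $\R^d$ (which the setup provides, since $u_\mu$ vanishes outside $\Omega_\mu\subseteq\Omega_\lambda$) combined with a nonlocal boundary Harnack principle for the half Laplacian, in order to pass from the pointwise bound on $u$ to the correct inequality on $\Dn u$ at the contact point.
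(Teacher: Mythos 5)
This statement is not proved in the paper at hand: it is explicitly attributed to and imported from \cite{JKS2022} (Theorem~1.6 and Proposition~2.11 there), so there is no ``paper's own proof'' to compare against. With that caveat, a few comments on your sketch.

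Your starshapedness and uniqueness arguments capture exactly the right mechanism, and indeed the same one the present paper deploys repeatedly (compare the proofs of Theorem~\ref{family_D_lambda}(i) and (ii) and Lemma~\ref{scaling}): exploit the scaling identity $D^{1/2}_{t\Omega}\,u(\cdot/t)=t^{-1/2}\Dn u$ for $t<1$ to make the dilated solution an overdetermined supersolution with strictly larger generalized normal derivative, slide $\mu$ up to a first touching value $\mu^\ast$, and contradict the fractional Hopf lemma (Proposition~\ref{prop:hopf}). Your worry that a local ordering near the contact point does not suffice for the nonlocal Hopf lemma is correct; but as you note, in this construction the ordering $u_{\mu}\le u_\lambda$ is already global because $u_\mu$ vanishes outside $\Omega_\mu\subseteq\Omega_\lambda$, so no boundary Harnack argument is needed beyond what Proposition~\ref{prop:hopf} provides. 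One should still verify that $D^{1/2}$ exists at the contact point (Lemma~\ref{C12} in the paper plays this role) and that the two domains share the same inward normal there, which holds because they touch tangentially.

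Where the proposal is genuinely thin is the existence and regularity half. ``Minimize a suitable Alt--Caffarelli-type fractional energy'' plus ``the cited regularity results then give $C^{1,\alpha}$'' compresses the hardest part of \cite{JKS2022} into a clause. For a minimizer one must first prove nondegeneracy and optimal growth near the free boundary, then flatness or Lipschitz regularity, before the thin one-phase regularity theory of De~Silva--Savin--Sire and the higher-regularity results of Abatangelo--Ros-Oton apply; obtaining the overdetermined condition $\Dn u=\lambda$ pointwise on $\partial\Omega$ from the Euler--Lagrange equation is itself nontrivial. The Caffarelli--Silvestre extension is a legitimate technical device but does not by itself shortcut any of this. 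The boundedness-by-comparison and the $C^\infty$ bootstrap once the boundary is Lipschitz are believable, but the claim that uniqueness ``comes for free'' deserves at least one full paragraph of its own: one must run the dilation simultaneously in both directions (shrink $\Omega'_\lambda$ into $\Omega_\lambda$ and vice versa), rule out the degenerate case $u_{\mu^\ast}\equiv u_\lambda$ via the strict inequality $u=1$ on $K$ versus $u<1$ off $\overline K$, and only then conclude $\Omega_\lambda=\Omega'_\lambda$ and $u_\lambda=u'_\lambda$. In short: the geometric core of your sketch is in the right spirit and matches the toolkit of the paper, but the existence/regularity leg as written is a placeholder rather than a proof.
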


The aim of this paper is to study geometric properties of the family of sets $\{\Omega_{\lambda}\}_{\lambda > 0}$.

For any $\lambda_1, \lambda_2 > 0$, we define
$$
\triangle_{x_0}(\Omega_{\lambda_1}, \Omega_{\lambda_2}) = \inf\{|\ln \mu|: \, \mu\leq 1,\,\mu (\Omega_{\lambda_1}-x_0) \subseteq \Omega_{\lambda_2}-x_0\,\text{ and }\,\mu (\Omega_{\lambda_2}-x_0) \subseteq \Omega_{\lambda_1}-x_0\}.
$$ 
We simply write $\triangle$ instead of $\triangle_{x_0}$ when $x_0=0$, as we can usually assume up to a translation.

We say that a domain {\em $A \subset \R^d$ satisfies the uniform interior ball condition (with radius $r>0$)} if for any point $x\in\partial A$ there exists a ball $B_r(\xi)\subseteq A$ such that $x\in\partial B_r(\xi)\cap\partial A$. We denote by $r_A >0$ the supremum of all $r > 0$ such that $A$ satisfies the uniform interior ball condition with radius $r>0$ (note that the supremum is in fact the maximum {if $A$ is bounded}). It is clear that if a bounded domain $A \subset \R^d$ has a $C^2$ boundary then it satisfies the uniform interior ball condition.
 We show the following results.

\begin{thm}
\label{family_D_lambda}
Assume that a bounded domain $K \subset \R^d$ has a $C^2$ boundary and it is starshaped with respect to a ball $B_\rho(x_0)\subset K$ for some $\rho > 0$. Then the following properties hold.
\begin{enumerate}[(i)]
\item $0 < \lambda_1 < \lambda_2$ implies 
$$
\Omega_{\lambda_1} \supset \Omega_{\lambda_2}.
$$
\item For any $\lambda_1, \lambda_2 > 0$ we have
$$
\triangle_{x_0}(\Omega_{\lambda_1}, \Omega_{\lambda_2}) \le 2|\ln \lambda_2 - \ln \lambda_1|.
$$
\item $$
\lim_{\lambda\to+\infty}\dist(\partial{\Omega_{\lambda}}, \partial K) = 0,\qquad
\lim_{\lambda\to0^+}\dist(\partial{\Omega_{\lambda}}, \partial K) = +\infty\,.
$$
\item\label{item 4 of main theorem1}  There is a strictly decreasing function $g_{d,r_K}$, depending only on $r_K$ and on the dimension $d$, such that for every $\lambda>0$ it holds
$$
g_{d,r_K}(\lambda)\le\dist(\partial {\Omega_{\lambda}}, \partial K) \le \frac{1}{\lambda^2}\,.
$$
The function $g_{d,r_K}$ satisfies $\lim_{\lambda\to0^+}g_{d,r_K}(\lambda)=+\infty$. We have
$$
g_{1,r_K}(\lambda)=\sqrt{\frac{4Cr_K}{\lambda^2}+r_K^2}-r_K\,,
$$
and
\begin{equation}\label{gdrk}
	g_{d,r_K}(\lambda)\ge \frac{C}{\lambda^2\left(\min\left\{A(\lambda),A(\lambda)^{1/2d}\right\}+1\right)^{2d-1}}\quad\text{for }d\geq 2\,,
\end{equation}
where $C\in(0,1]$ is a constant only depending on the dimension $d$ and
$$
A(\lambda)=\frac{C}{r_K\lambda^2}\,.
$$
\end{enumerate}
\end{thm}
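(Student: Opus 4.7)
The four items split into two groups. Parts (i) and (ii) are relative comparison statements between two Bernoulli pairs, and we handle them together via a rescaling argument exploiting the $1$-homogeneity of $(-\Delta)^{1/2}$, combined with a Bernoulli comparison principle in the spirit of \cite{JKS2022}. Parts (iii) and (iv) are quantitative one-sided estimates; (iii) is immediate from (iv), whose two sides come from comparisons with two different explicit Bernoulli solutions: a radial one on an annulus (for the lower bound) and a $1$D half-harmonic profile (for the upper bound $1/\lambda^2$).

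\textbf{Plan for (i) and (ii).} Assume $x_0=0$. The main computational tool is the \emph{homogeneity identity}: for any $\mu>0$, the function $v^{(\mu)}(x):=u_\lambda(x/\mu)$ is the unique Bernoulli pair of Problem~\ref{bernoulli_problem} with obstacle $\mu K$, parameter $\mu^{-1/2}\lambda$ and free domain $\mu\Omega_\lambda$. For (i), suppose $\lambda_1<\lambda_2$ and $\Omega_{\lambda_2}\not\subseteq\Omega_{\lambda_1}$, and set $t^*:=\inf\{t\geq1:t\,\overline{\Omega_{\lambda_1}}\supseteq\Omega_{\lambda_2}\}>1$, so that $\partial(t^*\Omega_{\lambda_1})$ touches $\partial\Omega_{\lambda_2}$ internally at some $q$. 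Starshapedness of $K$ forces $t^*K\supseteq K$ and hence $\Omega_{\lambda_2}\subseteq t^*\Omega_{\lambda_1}$, so $(v^{(t^*)},t^*\Omega_{\lambda_1})$ is a Bernoulli pair of parameter $(t^*)^{-1/2}\lambda_1<\lambda_1<\lambda_2$. A fractional weak comparison principle applied to $v^{(t^*)}-u_{\lambda_2}$ on $\Omega_{\lambda_2}\setminus t^*K$, with nonnegative boundary data on $K\cup(t^*K\setminus\Omega_{\lambda_2})$, $\partial t^*K\cap\Omega_{\lambda_2}$, $\partial\Omega_{\lambda_2}$ and $\Omega_{\lambda_2}^c$, yields $v^{(t^*)}\geq u_{\lambda_2}$; the fractional Hopf lemma at $q$ then forces $\Dn v^{(t^*)}(q)>\Dn u_{\lambda_2}(q)$, i.e., $(t^*)^{-1/2}\lambda_1>\lambda_2$, the desired contradiction. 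For (ii), take $\mu=(\lambda_1/\lambda_2)^2\in(0,1]$: then $(v^{(\mu)},\mu\Omega_{\lambda_1})$ is a Bernoulli pair with obstacle $\mu K\subseteq K$ and the same parameter $\lambda_2$ as $(u_{\lambda_2},\Omega_{\lambda_2})$. The symmetric contradiction argument (monotonicity in the obstacle: enlarge $\Omega_{\lambda_2}$ by $s\geq1$ until it covers $\mu\Omega_{\lambda_1}$, then apply Hopf at the first touching point) gives $\mu\Omega_{\lambda_1}\subseteq\Omega_{\lambda_2}$; combined with (i), this $\mu$ witnesses both inclusions defining $\triangle_{x_0}$, so $\triangle_{x_0}(\Omega_{\lambda_1},\Omega_{\lambda_2})\leq|\ln\mu|=2|\ln\lambda_2-\ln\lambda_1|$.

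\textbf{Plan for the lower bound in (iv).} For each $p\in\partial K$, let $B_{r_K}(\xi_p)\subseteq K$ be an inscribed ball tangent to $\partial K$ at $p$, provided by the uniform interior ball condition. By the monotonicity-in-the-obstacle principle from the previous paragraph, the Bernoulli pair $(u^{\mathrm{rad}},\Omega^{(p)}_\lambda)$ associated with the obstacle $B_{r_K}(\xi_p)$ satisfies $\Omega^{(p)}_\lambda\subseteq\Omega_\lambda$; uniqueness and rotational symmetry force $\Omega^{(p)}_\lambda=B_{R(\lambda)}(\xi_p)$ for an explicit $R(\lambda)>r_K$. For $d=1$ the Poisson kernel of $(-\Delta)^{1/2}$ on an interval is explicit, and imposing $\Dn u^{\mathrm{rad}}(R(\lambda))=\lambda$ gives the algebraic identity $R(\lambda)^2-r_K^2=4Cr_K/\lambda^2$, yielding the stated formula for $g_{1,r_K}$. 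For $d\geq2$ the analogous analysis of the fractional Poisson kernel on the annulus $B_{R(\lambda)}(\xi_p)\setminus\overline{B_{r_K}(\xi_p)}$, combined with an elementary estimate of the boundary gradient of $u^{\mathrm{rad}}$ in terms of $R(\lambda)/r_K$, produces \eqref{gdrk}. The inclusion $B_{R(\lambda)}(\xi_p)\subseteq\Omega_\lambda$ gives $\dist(p,\partial\Omega_\lambda)\geq R(\lambda)-r_K=g_{d,r_K}(\lambda)$; taking the infimum over $p\in\partial K$ yields the lower bound, and $g_{d,r_K}(\lambda)\to+\infty$ as $\lambda\to0^+$ gives the corresponding limit in (iii).

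\textbf{Plan for the upper bound in (iv) and conclusion of (iii).} Let $w_\lambda:\R\to[0,1]$ be the unique $1$D profile equal to $1$ on $(-\infty,0]$, half-harmonic on $(0,R_\lambda)$, vanishing on $[R_\lambda,\infty)$, with $\Dn w_\lambda(R_\lambda)=\lambda$; an explicit Poisson-kernel computation on an interval yields $R_\lambda=4/(\pi^2\lambda^2)\leq 1/\lambda^2$. For any $p\in\partial K$ with outward unit normal $n_p$, the $d$-dimensional extension $W_\lambda(x):=w_\lambda(\langle x-p,n_p\rangle)$ is half-harmonic on the slab $S_\lambda:=\{0<\langle\cdot-p,n_p\rangle<R_\lambda\}$, by the tensorization $(-\Delta)^{1/2}[f(x_1)]=(-\partial_{x_1}^2)^{1/2}f(x_1)$. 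Using $W_\lambda\equiv1\geq u_\lambda$ on $\{\langle\cdot-p,n_p\rangle\leq0\}$ and $W_\lambda=0\leq u_\lambda$ on $\{\langle\cdot-p,n_p\rangle\geq R_\lambda\}$, a fractional weak comparison principle gives $u_\lambda\leq W_\lambda$ in $S_\lambda\cap\Omega_\lambda$, and hence forces $u_\lambda$ to vanish at distance at most $R_\lambda\leq 1/\lambda^2$ from $p$ along the outward normal ray; this proves the upper bound and the $\lambda\to+\infty$ limit in (iii). The main technical obstacle is precisely this last comparison: since $K$ is only starshaped (not necessarily convex), $K$ need not be entirely contained in the half-space $\{\langle\cdot-p,n_p\rangle\leq0\}$, so the global inequality $W_\lambda\geq u_\lambda$ on the inner half-space can fail where $K$ curls into the forward half-space. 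One resolves this either by localizing the barrier via a smooth cutoff around $p$ (at the cost of a constant absorbed by the gap $1-4/\pi^2>0$), or by replacing $W_\lambda$ with a radial Bernoulli solution on a small exterior tangent ball furnished by the $C^2$-regularity of $\partial K$, trading the sharp constant while preserving the $1/\lambda^2$ scaling.
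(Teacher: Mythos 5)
Your treatment of (i) and (ii) is essentially the paper's scaling-plus-fractional-Hopf argument, with cosmetic variations (you blow up $\Omega_{\lambda_1}$ rather than shrink $\Omega_{\lambda_2}$, and in (ii) you pick the scaling factor $\mu=(\lambda_1/\lambda_2)^2$ a priori instead of estimating the extremal one from the Hopf inequality); both variants work and recover the same estimates. Your lower bound in (iv) is also in the paper's spirit (a radial comparison on the interior tangent ball), though with two differences worth noting: the paper compares with the simple Dirichlet barrier $b$ on an annulus and controls $\Dn b$ via the Poisson-kernel estimates of Lemma \ref{boundary calc}, whereas you compare with the full Bernoulli solution on the tangent ball, which additionally requires its uniqueness and radial symmetry (citable from \cite{FJ2015,SV19}) and yields a one-sided \emph{inequality} for $R(\lambda)$, not the ``exact algebraic identity'' you claim. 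These are manageable.

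The genuine gap is in the upper bound $\dist(\partial\Omega_\lambda,\partial K)\le 1/\lambda^2$. Your slab barrier $W_\lambda(x)=w_\lambda(\langle x-p,n_p\rangle)$ cannot dominate $u_\lambda$ by the comparison you invoke. The comparison set must exclude $\overline K$ (there $u_\lambda=1$ and $(-\Delta)^{1/2}u_\lambda\ge 0$, so the difference $W_\lambda-u_\lambda$ is not a supersolution), so the boundary data must be ordered on $\overline K$, on $\Omega_\lambda^c$, and on the two faces of the slab inside $\Omega_\lambda\setminus\overline K$. The forward face $\{\langle\cdot-p,n_p\rangle\ge R_\lambda\}$ is fatal: there $W_\lambda\equiv 0$ while $u_\lambda>0$ wherever $\Omega_\lambda$ extends past the slab, and $\Omega_\lambda$ \emph{will} extend past $\{\langle\cdot-p,n_p\rangle=R_\lambda\}$ in directions away from $p$ (e.g.\ laterally, or on the far side of $K$), regardless of how close $\partial\Omega_\lambda$ is to $\partial K$ near $p$. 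In other words, the inequality $u_\lambda\le W_\lambda$ on $\{\langle\cdot-p,n_p\rangle\ge R_\lambda\}$ is precisely the conclusion you are trying to reach, so the argument is circular. You correctly flag the separate issue that $K$ may not lie entirely in the backward half-space, but your two proposed fixes (a cutoff, or an exterior tangent ball) only address that issue, not the forward-face one; moreover a ``cutoff'' does not localize a nonlocal comparison. The paper sidesteps both problems by comparing with the Dirichlet barrier $\beta$ on a \emph{circumscribed} annulus $B_M(x_0')\setminus\overline{B_m(x_0')}$ with $K\subset B_m(x_0')$ and $\Omega_\lambda\subset B_M(x_0')$: there $\beta\ge u_\lambda$ holds globally by construction, and the upper Poisson-kernel estimate $\Dn\beta\le(M-m)^{-1/2}$ combined with $M-m\ge\dist(\partial\Omega_\lambda,\partial K)$ gives $1/\lambda^2\ge\dist(\partial\Omega_\lambda,\partial K)$ cleanly. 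You need to replace your slab argument with this (or an equivalent non-circular) comparison.
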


\begin{rem}
We remark that if $K$ is convex with boundary of class $C^2$, then 
$$
r_K=\frac{1}{\kappa}\,,\quad\text{where $\kappa=\max\{\kappa_i(x)\,:\, i=1,\dots,d-1,\,x\in\partial K\}$}
$$
and $\kappa_1(x),\dots,\kappa_{d-1}(x)$ are the principal curvatures of $\partial K$ at $x$. This is true more in general, indeed for many regular domains, even not convex; on the other hand, without the convexity assumption it is easy to find smooth domains such that $r_K<1/\kappa$, for instance, this is the case for a dumbbell with a sufficiently tiny rod.
\end{rem}
\begin{rem}
	By  Theorem \ref{family_D_lambda} \eqref{item 4 of main theorem1}, we can see that there is a constant $D>0$, depending only on the dimension $d$, such that for every $\lambda>0$ it holds
	\begin{equation}
	\label{dist_Omega_K}
	\frac{D r_K^\beta}{\lambda^{2\alpha}}\le\dist(\partial {\Omega_{\lambda}}, \partial K) \le \frac{1}{\lambda^2}\,,
	\end{equation}
	where the exponents $\alpha$ and $\beta$ are as follows:
	$$
	\begin{array}{ll}
			\left\{\begin{array}{ll}\alpha=1\,,\,\,\beta=0\,&\text{for $\lambda$ large enough}\,,\\
			\\
			\alpha=1/2d\,,\,\,\beta=(2d-1)/2d\,&\text{ for $\lambda$ small enough}\,.
		\end{array}\right.
	\end{array}
	$$
Essentially, notice that if $\lambda$ is large then $\dist(\partial {\Omega_{\lambda}}, \partial K)$ behaves like $1/\lambda^2$, which is different from the classical case for which it behaves like $1/\lambda$, see \cite[Theorem 15]{FR1997}.
\end{rem}

\begin{thm}
\label{inward_normal_ray}
In the same assumptions about $K$ as in Theorem \ref{Jarohs_Kulczycki_Salani}, for any $x \in \partial \Omega_{\lambda}$ the inward normal ray to $\partial \Omega_{\lambda}$ meets the convex envelope of $\overline{K}$.
\end{thm}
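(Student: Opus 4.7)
The plan is to argue by contradiction, exploiting strict separation together with the moving planes method underlying Theorem \ref{Jarohs_Kulczycki_Salani}. Assume that for some $x\in\partial\Omega_\lambda$ the inward normal ray $R=\{x+tn(x):t\geq 0\}$ is disjoint from the convex envelope of $\overline K$. Since $R$ is closed convex and $\mathrm{conv}(\overline K)$ is compact convex, the standard separation theorem provides a non-empty open family $U\subseteq S^{d-1}$ of unit vectors $\nu$, together with associated reals $c_\nu$, such that
$$
\mathrm{conv}(\overline K)\subset\{y:\sp{y}{\nu}<c_\nu\}\quad\text{and}\quad R\subset\{y:\sp{y}{\nu}>c_\nu\}.
$$
Sending $t\to+\infty$ in the second inclusion forces $\sp{n(x)}{\nu}\geq 0$ for every $\nu\in U$. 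As $U$ has non-empty interior in $S^{d-1}$ while $\{\nu\in S^{d-1}:\sp{n(x)}{\nu}=0\}$ has empty interior, I may select $\nu\in U$ for which the strict inequality $\sp{n(x)}{\nu}>0$ holds; the availability of such a direction is the key geometric step.

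The next step is to invoke the moving planes argument of \cite{JKS2022} in direction $\nu$. Let $\Sigma_a=\{y:\sp{y}{\nu}>a\}$ and let $T_a$ denote reflection across $H_\nu^a=\{y:\sp{y}{\nu}=a\}$. The proof of Theorem \ref{Jarohs_Kulczycki_Salani} yields the pointwise comparison $u_\lambda\circ T_a\geq u_\lambda$ on $\Sigma_a$ for every $a>\sp{x_0}{\nu}+\rho$; since $u_\lambda>0$ precisely on $\Omega_\lambda$, this translates into the geometric inclusion $T_a(\Omega_\lambda\cap\Sigma_a)\subseteq\overline{\Omega_\lambda}$. Because $\overline{B_\rho(x_0)}\subseteq\overline K$, we have $\sp{x_0}{\nu}+\rho\leq c_K:=\max_{y\in\overline K}\sp{y}{\nu}$, and strict separation provides $c_K<c_\nu<\sp{x}{\nu}$; so the choice $a:=\sp{x}{\nu}$ is admissible.

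The contradiction comes from a local computation at $x$. For $\varepsilon>0$ small put $w_\varepsilon:=x+\varepsilon\nu$. Since $\sp{\nu}{n(x)}>0$, the direction $\nu$ is strictly inward at $x\in\partial\Omega_\lambda$, so the $C^\infty$ regularity of $\partial\Omega_\lambda$ from Theorem \ref{Jarohs_Kulczycki_Salani} yields $w_\varepsilon\in\Omega_\lambda$, while $w_\varepsilon\in\Sigma_a$ holds by construction. The geometric inclusion then forces $T_a(w_\varepsilon)=x-\varepsilon\nu\in\overline{\Omega_\lambda}$. On the other hand $-\nu$ is strictly outward at $x$, so $x-\varepsilon\nu\notin\overline{\Omega_\lambda}$ for every sufficiently small $\varepsilon>0$: this is the desired contradiction.
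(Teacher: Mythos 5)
Your argument contains a genuine gap in the step where you invoke the moving plane method. You assert that ``the proof of Theorem \ref{Jarohs_Kulczycki_Salani} yields the pointwise comparison $u_\lambda\circ T_a\geq u_\lambda$ on $\Sigma_a$ for every $a>\langle x_0,\nu\rangle+\rho$,'' but this is not what starshapedness of $\Omega_\lambda$ with respect to $B_\rho(x_0)$ gives. Starshapedness with respect to a ball controls the \emph{tangent hyperplanes} of $\partial\Omega_\lambda$ (namely $\langle n(y), y-x_0\rangle \geq \rho$ for the outward normal at $y\in\partial\Omega_\lambda$); it is a radial condition and does \emph{not} imply the reflection inclusion $T_a(\Omega_\lambda\cap\Sigma_a)\subset\Omega_\lambda$ for all $a>\langle x_0,\nu\rangle+\rho$. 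For a non-convex starshaped domain (for example, a smooth ``star'' with pronounced lobes that is nonetheless starshaped with respect to a small central ball), the critical position $t_0$ of Lemma \ref{moving plane argument} -- below which the reflected cap can escape $\Omega_\lambda$ or the plane becomes orthogonal to $\partial\Omega_\lambda$ -- may be much larger than $\langle x_0,\nu\rangle+\rho$. So the inclusion you need at $a=\langle x,\nu\rangle$ is simply not available as a free corollary of starshapedness.

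To actually reach the value $a=\langle x,\nu\rangle$ with the moving plane, you would have to rule out the critical cases for $t > a$: internal tangency (case 1) via the Hopf lemma, and orthogonality of the plane to $\partial\Omega_\lambda$ (case 2), which in the fractional setting requires the corner-point lemma (Lemma \ref{cornerpoint}) and Lemma \ref{normal_derivative}. But that is precisely the content of the paper's own proof of Theorem \ref{inward_normal_ray}: the paper does not claim the moving plane reaches any prescribed level; it works at the \emph{first critical position} $t^\ast$, shows that cases 3--4 (involving $K$) cannot occur there because $K$ lies strictly on the far side, and handles case 1 with the Hopf lemma and case 2 with the corner-point machinery. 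Your clever tilting of the direction $\nu$ so that $\langle n(x),\nu\rangle>0$ is an attractive idea (it would indeed make the final contradiction at $x$ trivial, bypassing the corner lemma), but it only pays off if you can guarantee the plane reaches $x$ -- and guaranteeing that is exactly what requires the corner lemma when orthogonality occurs farther out along $\partial\Omega_\lambda$. As written, your proof either silently presupposes a result that is not stated in the paper, or is circular: the reflection inequality you use is itself established only by the Hopf/corner-point arguments you are trying to avoid.
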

\begin{rem}
In fact, the assumptions on $K$ in Theorem \ref{inward_normal_ray} (i.e., $C^2$ regularity and starshapedness with respect to a ball) only serve to assure the existence of a suitably regular solution $(\Omega_\lambda,u_\lambda)$, via Theorem \ref{Jarohs_Kulczycki_Salani}.
So, as soon as we have such a solution, the conclusion of the theorem still holds, even without such assumptions.
Moreover, we remark that in the planar case, $d=2$, meeting the convex envelope of $K$ and meeting $K$ are equivalent, hence the thesis can be written, more intriguingly, as follows: {\em for any $x \in \partial \Omega_{\lambda}$, the inward normal ray to $\partial \Omega_{\lambda}$ meets $K$}.
\end{rem}

We notice that Theorem \ref{inward_normal_ray} gives interesting information about the shape of $\Omega_\lambda$, which, from a purely qualitative point of view, in particular says that, when $\lambda\to 0^+$, $\Omega_\lambda$ tends to look like a ball with center in $K$. Information about the shape of $\Omega_\lambda$ can be obtained also from \cite{JKS2019} and \cite{FJ2015,SV19}. For instance, when $K$ is convex, \cite{JKS2019} tells that $\Omega_\lambda$ must be starshaped with respect to every point of $K$, while \cite{FJ2015,SV19} informs that, if $K$ is symmetric with respect to a hyperplane, the same happens for $\Omega_\lambda$ -- hence if $K$ is a ball, then $\Omega_\lambda$ is a ball as well.

For the classical exterior Bernoulli free boundary problem properties of parametrized families of free boundaries have been studied in \cite{A1989, AM1995, FR1997, S1994, T1975}. In particular, in the classical case estimates of the type (\ref{dist_Omega_K}) have been proved in \cite[Theorem 15]{FR1997} and properties similar to the ones presented in Theorem \ref{family_D_lambda} (i), (ii) have been proved in \cite[Theorem 3.9 (ii), (iii)]{AM1995}. In \cite{S1994} it was proved that the normal to the free boundary always hits the convex hull of the fixed boundary (this was earlier shown for dimension $d = 2$ in \cite{T1975} by different methods).

The rest of the paper is organized as follows.
In Sections 2, we give some preliminaries and prove some technical lemmas. In Section 3, we prove the main results, that are Theorem \ref{family_D_lambda} and Theorem \ref{inward_normal_ray}.


\begin{figure}
\centering
\begin{tikzpicture}[x=1cm,y=1cm,step=1cm]
\node at (0,0) {\includegraphics[width=0.5\textwidth]{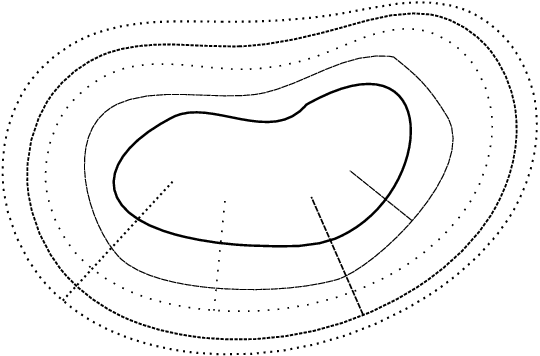}};
\node at (-1,0.5) {$K$};
\node at (-2.3,0.85) {$\Omega_{\lambda_1}$};
\node at (3,1.5) {$\Omega_{\lambda_2}$};
\node at (0,-2.2) {$\Omega_{\lambda_3}$};
\node at (-0.5,2.35) {$\Omega_{\lambda_4}$};
\end{tikzpicture}
\caption{$\Omega_{\lambda_i}$ for $i=1,2,3,4$ with $\lambda_1<\lambda_2<\lambda_3<\lambda_4$ and the corresponding normal rays intersecting $K$.}
\label{fig for theorem 2}
\end{figure}

%
%

\section{Preliminaries}\label{preliminearies}

Given a fixed open set $\Omega\subset \R^d$, we let $\delta(x)$ denote the distance of $x$ to $\R^d\setminus \Omega$. We use the following notation for halfspaces and reflections across the boundary of halfspaces. Given a halfspace $H\subset \R^d$, that is, $H=H_{\lambda,e}:=\{x\in \R^d\;:\; x\cdot e> \lambda\}$ for some $\lambda\in\R$ and $e\in \partial B_1(0)$, let $Q:=Q_{\lambda,e}:\R^d\to\R^d$ be the reflection across $\partial H$, that is
$$
\overline{x}:=Q(x)=x-2(x\cdot e)e+2\lambda e\quad\text{for $x\in \R^d$.}
$$
Moreover, for a function $u:\R^d\to\R$ let $\overline{u}:=u\circ Q$. By rotation, we may usually simply consider $e=e_1:=(1,0,\ldots,0)$.

Let $\Omega \subset \R^d$ be an open, bounded set. We
say that $u\in H^{1/2}(\R^d)$ satisfies (in weak sense)
	$$
	(-\Delta)^{1/2}u\geq 0\quad\text{in $\Omega$,}
	$$
if for all nonnegative $v\in H^{1/2}(\R^d)$ with $\supp\,v\subset \Omega$ it holds
$$
\int_{\R^d}\int_{\R^d}\frac{(u(x)-u(y))(v(x)-v(y))}{|x-y|^{d+1}}\,dxdy\geq 0.
$$
Here, as usual, $H^{1/2}(\R^d)$ denotes the (fractional) Sobolev space of order $\frac12$.\\
We emphasize that the solution given by Theorem \ref{Jarohs_Kulczycki_Salani} belongs\footnote{It is noteworthy here, that indeed neither $H^{1/2}(\R^d)$ nor $C^{1/2}_c(\R^d)$ are subsets of each other. Thus we study functions belonging to the intersection of these two spaces.} to $H^{1/2}(\R^d)$ by construction. {{The following Lemma implies, that it also belongs to $C^{1/2}(\R^d)$.}} 

\begin{lemma}
\label{C12}
{{Let $\Omega \subset \R^d$ be an open, nonempty, connected, bounded set, which has a $C^{1,1}$ boundary, and let $K \subset \Omega$ be open and nonempty with $C^{1,1}$ boundary, such that $\dist(K, \partial(\Omega)) > 0$. Assume that $u: \R^d \to [0,1]$, $u \in H^{1/2}(\R^d)$ and satisfies $u = 1$ on $\overline{K}$, $u = 0$ on $\Omega^c$ and $(-\Delta)^{1/2} u = 0$ in $U := \Omega \setminus \overline{K}$. Then $D_{U}^{1/2}u(x)$ is well defined for all $x \in \partial U$  { and $u\in C^{1/2}(\R^d)$.}}}
\end{lemma}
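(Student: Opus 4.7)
The plan is to reduce the situation to the standard boundary regularity theory for the fractional Laplacian with bounded right-hand side and \emph{zero} exterior datum, by absorbing the inner boundary value ``$u=1$ on $\overline K$'' into a smooth correction. Since $\dist(\overline K, \partial\Omega)>0$, I pick a cutoff $\eta\in C^\infty_c(\R^d)$ with $\eta\equiv 1$ on an open neighborhood of $\overline K$ and $\supp\eta\subset\subset\Omega$, and set $w:=u-\eta$. By construction $w\equiv 0$ on $U^c=\overline K\cup\Omega^c$, and $w\in H^{1/2}(\R^d)$ since both $u$ and $\eta$ are. By linearity of the weak formulation of $(-\Delta)^{1/2}$, $w$ solves
$$
(-\Delta)^{1/2}w = -(-\Delta)^{1/2}\eta =: f \quad\text{in } U,
$$
and $f\in L^\infty(\R^d)$ (indeed $C^\infty$) because $\eta$ is smooth with compact support.

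Next I would use that $\partial U=\partial\Omega\sqcup\partial K$ is of class $C^{1,1}$ (the two components are separated because $\dist(\overline K,\partial\Omega)>0$), and that $w\in H^{1/2}(\R^d)$ vanishes on $U^c$ and satisfies $(-\Delta)^{1/2}w=f\in L^\infty(U)$. The now classical boundary regularity theory for the fractional Laplacian on $C^{1,1}$ domains (Ros-Oton–Serra type results on the Dirichlet problem) yields $w\in C^{1/2}(\R^d)$ together with the finer boundary expansion $w/\delta_U^{1/2}\in C^\alpha(\overline{U})$ for some $\alpha>0$, where $\delta_U(x)=\dist(x,U^c)$. In particular, for every $x\in\partial U$ with inward unit normal $n(x)$,
$$
D_U^{1/2}w(x)=\lim_{t\to 0^+}\frac{w(x+tn(x))}{t^{1/2}}
$$
exists.

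Finally, $\eta$ is smooth so trivially $\eta\in C^{1/2}(\R^d)$, hence $u=w+\eta\in C^{1/2}(\R^d)$. For $x\in\partial U$, note that $w(x)=0$ (since $u(x)=\eta(x)$ both on $\partial\Omega$, where they are $0$, and on $\partial K$, where they are $1$), and write
$$
\frac{u(x+tn(x))-u(x)}{t^{1/2}}=\frac{w(x+tn(x))}{t^{1/2}}+\frac{\eta(x+tn(x))-\eta(x)}{t^{1/2}}.
$$
The first term tends to $D_U^{1/2}w(x)$ by the previous step, while the second is $O(t^{1/2})$ by Lipschitz continuity of $\eta$. Hence $D_U^{1/2}u(x)$ exists (and equals $D_U^{1/2}w(x)$).

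The main obstacle is identifying and invoking the correct boundary regularity result in the exact form needed: namely, a statement that gives both the global $C^{1/2}(\R^d)$ bound and the $\delta_U^{1/2}$ boundary behavior for weak $H^{1/2}$ solutions of $(-\Delta)^{1/2}w=f\in L^\infty(U)$ with zero exterior datum on a bounded $C^{1,1}$ domain. Everything else (the cutoff reduction and the passage from $w$ back to $u$) is soft, the only point to check being that the separation $\dist(\overline K,\partial\Omega)>0$ lets us choose $\eta$ with the required properties and ensures that $\partial U$ is genuinely of class $C^{1,1}$ so the regularity theory applies at both components of $\partial U$.
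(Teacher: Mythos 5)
Your proposal is correct and is essentially the same argument as the paper's: subtract a smooth compactly supported cutoff (your $\eta$ is the paper's $v$) equal to $1$ near $\overline K$ and supported in $\Omega$, reduce to a zero-exterior-datum Dirichlet problem with bounded right-hand side on the $C^{1,1}$ annular domain $U$, and invoke Ros-Oton--Serra boundary regularity (Proposition 1.1 and Theorem 1.2 in \cite{RS2014}) to get $w\in C^{1/2}(\R^d)$ and $w/\delta_U^{1/2}\in C^\alpha(\overline U)$, then add back the smooth correction. The only cosmetic difference is that you bound the cutoff's contribution to $D_U^{1/2}$ by Lipschitz continuity, whereas the paper's cutoff is locally constant near both components of $\partial U$ so that contribution is in fact identically zero for small $t$.
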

\begin{proof} {{Denote $r = \dist(K, \partial(\Omega))$. Let $v \in C_c^{\infty}(\R^d)$ be such that $v \equiv 1$ on $\{x \in \R^d: \, \dist(x,K) < r/2\}$ and $\supp(v) \subset \Omega$. Put $w = u - v$. Note that $(-\Delta)^{1/2} v$ is bounded, denote $f = (-\Delta)^{1/2} v$. Then $w$ satisfies
\begin{equation*}
\left\{
\begin{aligned}
(-\Delta)^{1/2}w(x)&=-f(x), &&\text{for $x \in \Omega \setminus \overline{K}$,}\\
w(x)&=0, &&\text{for $x \in \overline{K} \cup \Omega^c$.}
\end{aligned}
\right.
\end{equation*}
Note that $U$ is a bounded domain with $C^{1,1}$ boundary. Hence, by Proposition 1.1 and Theorem 1.2 in \cite{RS2014}, we obtain $w \in C^{1/2}(\R^d)$ and the function $x\mapsto w(x)\dist(x,\R^d\setminus U)^{-1/2}$ belongs to $C^{\alpha}(\overline{U})$ for some $\alpha\in(0,\frac12)$. Hence $D_{U}^{1/2}u(x) = D_{U}^{1/2}w(x)$ is well defined for all $x \in \partial U$. Clearly, $v \in C^{1/2}(\R^d)$. Hence, $u \in C^{1/2}(\R^d)$.}}
\end{proof}

\begin{prop}[Fractional Hopf lemma]\label{prop:hopf}
Let $U\subset \R^d$ be an open bounded set and let $u\in H^{\frac12}(\R^d) \cap C^{1/2}(\R^d)$ satisfy
$$
(-\Delta)^{1/2}u\geq 0\quad\text{in $U$;}\quad u\geq 0\quad\text{in $\R^d\setminus U$.}
$$
Then either $u\equiv 0$ in $\R^d$ or $u>0$ in $U$.\\
Moreover, if $u>0$ in $U$ and, in addition, there is $x_0\in \partial U$ such that $u(x_0)=0$ and there is a ball $B\subset U$ with $\partial B\cap \partial U=\{x_0\}$, then 
{{
$$
\liminf_{t\to 0^+}\frac{u(x_0+tn(x_0))}{t^{1/2}}>0,
$$
where $n(x_0)$ is the inward unit normal vector to $U$ at $x_0$. In particular, if $D_{U}^{1/2} u(x_0)$ exists, then it is positive.}}
\end{prop}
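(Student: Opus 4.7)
The plan is to derive both assertions from the Poisson representation of $(-\Delta)^{1/2}$-harmonic functions on a ball, together with the fact that a weak supersolution lies above its Poisson extension. Given any ball $B=B_r(\xi)\subset U$, let
$$
P_B(x,y)=C_d\left(\frac{r^2-|x-\xi|^2}{|y-\xi|^2-r^2}\right)^{\!1/2}\frac{1}{|x-y|^d}\qquad (x\in B,\ y\in\R^d\setminus\overline B)
$$
be the Poisson kernel of the half Laplacian on $B$, with some $C_d>0$. Since $u$ is bounded, the Poisson integral $(P_Bu)(x):=\int_{B^c}P_B(x,y)u(y)\,dy$ is a well-defined bounded $(-\Delta)^{1/2}$-harmonic extension of $u|_{B^c}$ to $B$. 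The difference $u-P_Bu$ (extended by $0$ outside $B$) is then a weak supersolution in $B$ with zero exterior data, so the weak maximum principle produces the key bound
\begin{equation}\label{poisb}
u(x)\geq C_d\bigl(r^2-|x-\xi|^2\bigr)^{1/2}\int_{B^c}\frac{u(y)}{(|y-\xi|^2-r^2)^{1/2}|x-y|^d}\,dy\qquad(x\in B).
\end{equation}

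From \eqref{poisb} the dichotomy follows quickly. First, testing the supersolution inequality with $u^-=\max\{-u,0\}$, which is supported in $U$ because $u\geq 0$ off $U$ and lies in $H^{1/2}$ by the lattice property, yields $u\geq 0$ on $\R^d$. Suppose now $u\not\equiv 0$ and, for contradiction, $u(x_0)=0$ at some $x_0\in U$. Choose $B=B_r(x_0)\subset U$ and evaluate \eqref{poisb} at $x=\xi=x_0$: the left-hand side is $0$, so the nonnegative integrand on the right must vanish a.e.\ on $B^c$, and by continuity $u\equiv 0$ on $B^c$. Applying the same step to every smaller concentric ball $B_{r'}(x_0)$, $0<r'<r$, forces $u\equiv 0$ on each annulus $\{r'\leq|y-x_0|<r\}$, hence $u\equiv 0$ on $\R^d$, a contradiction.

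For the Hopf estimate, assume $u>0$ in $U$ and let $B=B_r(\xi)\subset U$ be such that $\partial B\cap\partial U=\{x_0\}$; then $n=(\xi-x_0)/r$ is the inward unit normal to $U$ at $x_0$. For $0<t<r$ one has $|x_0+tn-\xi|=r-t$, so $(r^2-|x_0+tn-\xi|^2)^{1/2}=t^{1/2}(2r-t)^{1/2}$. Plugging $x=x_0+tn$ into \eqref{poisb}, dividing by $t^{1/2}$, and passing to the liminf as $t\to 0^+$, Fatou's lemma (the integrand is nonnegative and converges pointwise for $y\neq x_0$) gives
$$
\liminf_{t\to 0^+}\frac{u(x_0+tn)}{t^{1/2}}\geq C_d(2r)^{1/2}\int_{B^c}\frac{u(y)}{(|y-\xi|^2-r^2)^{1/2}|x_0-y|^d}\,dy.
$$
Since $u$ is continuous and strictly positive on the nonempty open set $U\setminus\overline B$ (nonempty because the condition $\partial B\cap\partial U=\{x_0\}$ forces $B\subsetneq U$), the integral on the right is strictly positive, which is the desired liminf estimate. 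If $D_U^{1/2}u(x_0)$ exists, it coincides with this limit and is therefore positive.

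The main technical obstacle is the rigorous derivation of \eqref{poisb} from the weak hypothesis on $u$, i.e.\ verifying that $P_Bu$ is admissible as a comparison function and that the weak maximum principle applies to $u-P_Bu$. This is however a standard fact from nonlocal potential theory, essentially a form of the Riesz decomposition of a supersolution, and it only uses that $(-\Delta)^{1/2}u\geq 0$ weakly on $B\subset U$, which is inherited from the hypothesis.
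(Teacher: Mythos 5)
Your proof is correct in substance but takes a genuinely different route from the paper, which dispatches the statement by citing Propositions 3.1, 3.3 and Remark 3.5 of \cite{FJ2015} (whose argument is a barrier/comparison construction). You instead give a self-contained proof built entirely on the explicit Poisson kernel of $(-\Delta)^{1/2}$ on a ball, obtaining both the strong maximum principle and the Hopf estimate from the single supersolution bound $u\geq P_B u$ and an application of Fatou. This is arguably cleaner and more quantitative than a barrier argument: the boundary behavior $(r^2-|x-\xi|^2)^{1/2}\sim t^{1/2}(2r)^{1/2}$ of the Poisson kernel produces the sharp $t^{1/2}$ rate directly, and the same inequality evaluated at the center of the ball gives the strong maximum principle with no separate construction, automatically propagating through all connected components of $U$.

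Two points deserve slightly more care than you give them. First, testing the weak inequality with $u^-$ requires $u^-$ to be an admissible test function; since $u^-$ only vanishes on $\R^d\setminus U$ (not on a neighborhood of $\partial U$), one needs either a density argument placing $u^-$ in $H^{1/2}_0(U)$ or a truncation/cutoff approximation, which is standard but should be noted. Second, the inequality $u\geq P_B u$ in $B$ is indeed the crux, and you correctly flag it; to make it rigorous one should argue that the Poisson extension $P_B u$ coincides with the variational $(-\Delta)^{1/2}$-harmonic replacement of $u$ in $B$ (which exists and lies in $H^{1/2}(\R^d)$ because $u$ itself is a finite-energy extension of its own exterior data), and then invoke the weak comparison principle for $u-P_B u$, which vanishes outside $B$ and is a supersolution in $B$. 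With these two technicalities filled in, the argument is complete, and the rest --- identifying the normal via the touching ball, the Fatou passage, and the positivity of the limit integral because $u>0$ on the nonempty open set $U\setminus\overline B$ --- is sound.
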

\begin{proof}
This statement follows from \cite{FJ2015}, in particular combining Proposition 3.3, Remark 3.5 and Proposition 3.1 therein.
\end{proof}

\begin{prop}[Fractional Hopf lemma -- a variant for antisymmetric functions]\label{prop:hopf2}
Let $H\subset \R^d$ be a halfspace and let $Q:\R^d\to\R^d$ be the reflection at $\partial H$. Let $W\subset H$ open and let $v\in H^{\frac12}(\R^d) \cap C^{1/2}(\R^d)$ such that $v(Q(x))=-v(x)$ for all $x \in H$. If
$$
(-\Delta)^{1/2}v\geq 0\quad\text{in $W$;}\quad v\geq 0\quad\text{in $H\setminus W$.}
$$
Then either $v\equiv 0$ in $\R^d$ or $v>0$ in $W$.\\
Moreover, if $v>0$ in $W$ and, in addition, there is $x_0\in \partial W\setminus \partial H$ such that $v(x_0)=0$ and there is a ball $B\subset W$ with $\partial B\cap \partial W=\{x_0\}$, then 
{{
$$
\liminf_{t\to 0^+}\frac{v(x_0+tn(x_0))}{t^{1/2}}>0,
$$
where $n(x_0)$ is the inward unit normal vector to $W$ at $x_0$. In particular, if $D_{W}^{1/2} v(x_0)$ exists, then it is positive.}}
\end{prop}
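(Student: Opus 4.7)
The plan is to mimic the proof of Proposition \ref{prop:hopf} after rewriting $(-\Delta)^{1/2}v$ in a form adapted to the antisymmetry. For $x\in H$ and $v$ antisymmetric with respect to $Q$, splitting the defining integral over $H$ and $\R^d\setminus\overline H$ and changing variables $y\mapsto Q(y)$ in the second piece gives, with a dimensional constant absorbed,
$$
(-\Delta)^{1/2}v(x)=\mathrm{p.v.}\!\int_H J_H(x,y)\,(v(x)-v(y))\,dy+c_H(x)\,v(x),
$$
where
$$
J_H(x,y)=\frac{1}{|x-y|^{d+1}}-\frac{1}{|x-Q(y)|^{d+1}}>0\quad\text{for }x,y\in H,
$$
and $c_H(x)=2\int_H|x-Q(y)|^{-d-1}\,dy\geq 0$. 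This puts $v$ in the setting of a nonlocal operator with a strictly positive kernel and a nonnegative zeroth-order coefficient, where the standard fractional maximum principle and Hopf estimate of \cite{FJ2015} apply directly.

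For the strong maximum principle part, I first show $v\geq 0$ on all of $H$. Assume $v\not\equiv 0$ and, by contradiction, that $m:=\inf_H v<0$. Since $v\geq 0$ on $H\setminus W$ and $v$ is continuous, $m$ is attained at some $x_0\in W$. Evaluating the formula above at $x_0$ and using $v(x_0)\leq v(y)$ for all $y\in H$ together with $J_H>0$ gives $(-\Delta)^{1/2}v(x_0)\leq c_H(x_0)v(x_0)\leq 0$, which combined with the hypothesis forces equality throughout and hence $v\equiv v(x_0)$ on $H$; antisymmetry then yields $v(x_0)=0$, contradicting $m<0$. With $v\geq 0$ on $H$ secured, the same evaluation at any $x_0\in W$ with $v(x_0)=0$ gives
$$
0\leq (-\Delta)^{1/2}v(x_0)=-\int_H J_H(x_0,y)v(y)\,dy\leq 0,
$$
so $v\equiv 0$ in $H$ and again in $\R^d$ by antisymmetry, contradicting $v\not\equiv 0$. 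Hence $v>0$ on $W$.

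For the Hopf estimate at $x_0\in\partial W\setminus\partial H$, the crucial point is that $x_0$ lies in the open halfspace $H$, so there is $\delta>0$ with $\dist(x_0,\partial H)>\delta$. Shrinking the given ball $B\subset W$ with $\partial B\cap\partial W=\{x_0\}$, I may assume $\overline B\subset H$ and $\dist(B,\partial H)\geq \delta/2$. On this ball the antisymmetry of $v$ plays no role at the boundary point of interest, and the situation reduces to that of Proposition \ref{prop:hopf}: $v\in H^{1/2}(\R^d)\cap C^{1/2}(\R^d)$, $(-\Delta)^{1/2}v\geq 0$ on $B$, $v\geq 0$ on $\R^d\setminus B$ (here using $v>0$ on $W$ and $v\geq 0$ on $H\setminus W$, together with the fact that the contribution from $\R^d\setminus H$, where $v\leq 0$, only affects the equation via the antisymmetric rewriting already incorporated above), and $v(x_0)=0$. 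Applying the standard fractional Hopf lemma cited from \cite{FJ2015} to the equivalent antisymmetric formulation with the positive kernel $J_H$, one obtains $\liminf_{t\to 0^+}t^{-1/2}v(x_0+tn(x_0))>0$, from which the statement about $D_W^{1/2}v(x_0)$ follows at once.

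The main obstacle is bookkeeping: making rigorous the reduction to the setting of Proposition \ref{prop:hopf} at a boundary point of $W$ inside $H$, while keeping track of the antisymmetric contributions from $\R^d\setminus H$. The substantive content is already in \cite{FJ2015}, and indeed the statement is the standard antisymmetric companion to Proposition \ref{prop:hopf} that is routinely used in the fractional moving-plane method; the proof amounts to combining the antisymmetric rewriting above with Propositions 3.1 and 3.3 and Remark 3.5 of \cite{FJ2015} in the same way as for Proposition \ref{prop:hopf}.
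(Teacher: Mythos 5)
Correct, and it follows essentially the same route as the paper, whose entire proof is the citation to Propositions 3.1 and 3.3 of \cite{FJ2015}: your antisymmetric kernel rewriting with $J_H(x,y)=|x-y|^{-d-1}-|x-Q(y)|^{-d-1}>0$ and the nonnegative zeroth-order coefficient $c_H$ is precisely the mechanism underlying those propositions, and your final appeal is to the same references. One small caution: the intermediate claim ``$v\geq 0$ on $\R^d\setminus B$'' is literally false (we only have $v\leq 0$ on $\R^d\setminus H$), so the step does \emph{not} reduce to Proposition \ref{prop:hopf} as stated; it is the antisymmetric formulation via $J_H$, which you correctly fall back on in the parenthetical and in the conclusion, that makes the Hopf estimate go through.
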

\begin{proof}
	This is a direct consequence of Proposition 3.1 and Proposition 3.3 in \cite{FJ2015}.
\end{proof}

	\begin{lemma}[A fractional corner point lemma, Lemma 4.4, \cite{FJ2015}]\label{cornerpoint}
Let $H=\{x\in \R^d\;:\; x_1>0\}$ and let $Q:\R^d \to \R^d$ be the reflection at $\partial H$. Let $W\subset \R^d$ be an open set, which is symmetric in $x_1$, that is $Q(W)=W$, and such that $0\in \partial W$. Assume further that the interior normal of $W$ at $0$ is given by $e_2=(0,1,0,\ldots,0)$. Let $U:=W\cap H$ (cf. Figure \ref{fig for corner point lemma}). Let $v\in H^{\frac12}(\R^d) \cap C^{1/2}(\R^d)$ such that $v(Q(x))=-v(x)$ for all $x \in H$ and
		$$
		(-\Delta)^{1/2}v\geq 0\quad\text{in $U$;}\quad v\geq 0\quad\text{in $H\setminus U$,}\quad v>0 \quad\text{in $U$.}
		$$
		Let $\eta=(1,1,0,\ldots,0)$. Then there is $C,t_0>0$ depending only on $W$ and $d$ such that
		$$
		v(t\eta)\geq Ct^{\frac{3}{2}}\quad\text{for all $t\in(0,t_0)$.}
		$$
	\end{lemma}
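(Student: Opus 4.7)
The strategy I would follow is to construct an explicit antisymmetric subsolution near the corner $0$ that has the claimed behavior $t^{3/2}$ along the diagonal $\eta$, and then dominate it by $v$ via the antisymmetric comparison built into Propositions \ref{prop:hopf} and \ref{prop:hopf2}.

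First I would perform a local reduction near $0$. Since $\partial W$ is tangent to the hyperplane $\{x_2=0\}$ at the origin with interior normal $e_2$, I can fix a small $r_0>0$ and a subset $V\subseteq U\cap B_{r_0}(0)$ that contains the bisector segment $\{t\eta : 0<t<t_0\}$, whose lateral face away from $\partial H$ is a smooth piece of $\partial W$, and whose ``far'' boundary $\Gamma:=\partial V\setminus(\partial H\cup\partial W)$ lies in the interior of $U$. By the antisymmetric Hopf Proposition \ref{prop:hopf2} applied at interior points of $\partial W\cap H$ (using an interior ball in $W$) together with continuity, there exists $c_0>0$ such that $v\geq c_0$ on $\Gamma$.

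Next I would construct the barrier. I look for $\psi$ of the form
$$
\psi(x)\;=\;x_1\,\chi(x)\,\bigl(x_2\bigr)_{+}^{1/2},
$$
where $\chi$ is a smooth radial cut-off supported in $B_{r_0}(0)$ and equal to $1$ on $B_{r_0/2}(0)$. By construction $\psi$ is antisymmetric in $x_1$, supported near $0$, and on the diagonal
$$
\psi(t\eta)\;=\;t\cdot 1\cdot t^{1/2}\;=\;t^{3/2}\qquad\text{for } 0<t<r_0/(2\sqrt{2}).
$$
The factor $(x_2)_{+}^{1/2}$ encodes the natural $1/2$-Hopf decay transverse to $\partial W$, while the factor $x_1$ provides both the antisymmetry and the extra linear rate that upgrades $1/2$ to $3/2$ along $\eta$.

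The analytic heart is then to show that $(-\Delta)^{1/2}\psi\leq 0$ in $V$. Here I would exploit the key identity $(-\Delta)^{1/2}[(x_2)_{+}^{1/2}]=0$ in $\{x_2>0\}$, write $(-\Delta)^{1/2}\psi$ as a commutator-type expression coming from multiplying $(x_2)_{+}^{1/2}$ by the smooth antisymmetric factor $x_1\chi(x)$, and estimate the remainder using the decay of the kernel; by shrinking $r_0$ (and paying a constant depending only on $W$ and $d$) the sign of the dominant term is negative, yielding $(-\Delta)^{1/2}\psi\leq 0$ pointwise in $V$. Finally I choose $\varepsilon>0$ so small that $\varepsilon\psi\leq v$ on $\Gamma$ (possible because $v\geq c_0$ there and $\psi$ is bounded), and note that $\varepsilon\psi\leq 0=v$ on $\partial H$ by antisymmetry and $\varepsilon\psi\leq 0\leq v$ on the piece of $\partial W$ in $\partial V$. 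Then the antisymmetric maximum principle (Proposition \ref{prop:hopf2} applied to $w:=v-\varepsilon\psi$, which satisfies $(-\Delta)^{1/2}w\geq 0$ in $V$, is antisymmetric in $x_1$, and is $\geq 0$ on $H\setminus V$) gives $w\geq 0$ in $V$, i.e.\ $v(t\eta)\geq\varepsilon\,t^{3/2}$ for all $t\in(0,t_0)$.

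The main obstacle is the verification that $\psi$ is a subsolution: the barrier mixes a smooth antisymmetric factor with a singular factor, and its half-Laplacian is a delicate singular integral whose sign has to be controlled uniformly on $V$. Everything else (the local geometric reduction, the boundary comparison, and the final application of the antisymmetric maximum principle) is fairly standard once this subsolution estimate is in hand.
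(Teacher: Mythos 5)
The paper does not prove Lemma~\ref{cornerpoint}; it is quoted from Fall--Jarohs \cite{FJ2015} (Lemma~4.4 there), so there is no internal proof to compare against and I evaluate your proposal on its own terms. Your building block $g(x)=x_1(x_2)_+^{1/2}$ is the natural one: it is antisymmetric across $\partial H$, homogeneous of degree $3/2$, and $(-\Delta)^{1/2}g=0$ in $\{x_2>0\}$ (the extra commutator coming from the factor $x_1$ vanishes by oddness in $y_1-x_1$). The local reduction, the lower bound on $\Gamma$ via Proposition~\ref{prop:hopf2} and compactness, and the final antisymmetric comparison would all be sound \emph{provided} the barrier is a subsolution. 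That step is where the argument actually fails, and the failure is a definite sign, not a delicate estimate: for $x\in V$ with $\chi(x)=1$ the cut-off error is exactly
$$
(-\Delta)^{1/2}\psi(x)=c_d\int_{\R^d}\frac{(1-\chi(y))\,g(y)}{|x-y|^{d+1}}\,dy
=c_d\int_{\{y_1>0\}}(1-\chi(y))\,y_1\,(y_2)_+^{1/2}\Bigl(\tfrac{1}{|x-y|^{d+1}}-\tfrac{1}{|x-Qy|^{d+1}}\Bigr)dy\,>\,0,
$$
where $Qy=(-y_1,y')$, since the bracket is strictly positive whenever $x_1>0$. Thus $\psi=\chi g$ is a strict \emph{super}solution of $(-\Delta)^{1/2}$ in $V$, which is the opposite of what the comparison requires, and the remark that ``by shrinking $r_0$ the sign of the dominant term is negative'' is incorrect: the expression is scale-covariant (it picks up a factor $r_0^{1/2}$ under $x\mapsto r_0x$) and keeps its sign at every scale. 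Consequently $w=v-\varepsilon\psi$ need not satisfy $(-\Delta)^{1/2}w\geq 0$ in $V$, and Proposition~\ref{prop:hopf2} cannot be applied to it.

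To repair the approach one must correct the barrier: subtract an antisymmetric compensator $A\phi\geq 0$ in $H$ with $(-\Delta)^{1/2}\phi$ bounded below on $V$ by a positive multiple of the cut-off error, while at the same time $\phi(t\eta)=o(t^{3/2})$ so the compensator does not destroy the diagonal lower bound. Finding such a $\phi$ (or, alternatively, replacing the truncated $g$ by the genuine Dirichlet solution in a symmetric neighbourhood of the corner and then proving the $t^{3/2}$ lower bound for that solution) is precisely the nontrivial content of the fractional corner lemma. As written, your proposal assumes this away, so there is a genuine gap at the heart of the barrier construction.
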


\begin{figure}
\begin{center}
\begin{tikzpicture}[x=1cm,y=1cm,step=1cm]
\begin{axis}[axis line style={draw=none},tick style={draw=none},xticklabel=\empty,yticklabel=\empty]
		\addplot [smooth] coordinates {(0,4) (4,3) (5,0) (4,-3) (1,-3.87) (0.49,-4)};
		\addplot [smooth,thick, dotted] coordinates {(0,4) (-4,3) (-5,0) (-4,-3) (-1,-3.87) (-0.49,-4)};
		\addplot [sharp plot,thick, dotted] coordinates { (-0.5,-4) (0,-4)  };
		\addplot [sharp plot] coordinates {  (0,-4) (0.5,-4) };
			\addplot [sharp plot,dashed] coordinates { (0,6) (0,-4) (0,-6) };
			\filldraw (0,-4) circle (1pt);
			\node at (-0.6,-4.6) {$0$};
			\node at (0.6,5) {$H$};
			\node at (1.6,2) {$U$};
			\draw[->,thick] (0,-4)--(0,-1);
			\draw[->,thick] (0,-4)--(2.55,-4);
			\draw[->,thick] (0,-4)--(2.4,-1.05);
			\node at (-0.6,-2.15) {$e_2$};
			\node at (1.4,-4.7) {$e_1$};
			\node at (0.7,-2.25) {$\eta$};
\end{axis}
\end{tikzpicture}
\end{center}
\caption{Exemplification of Lemma \ref{cornerpoint} with $U=W\cap H$ and respectively of Lemma \ref{normal_derivative} with $U=\Omega_+$ (but without $K$), and $e_1=(1,0,\ldots,0)$ and $e_2=(0,1,0,\ldots,0)$.}
\label{fig for corner point lemma}
\end{figure}
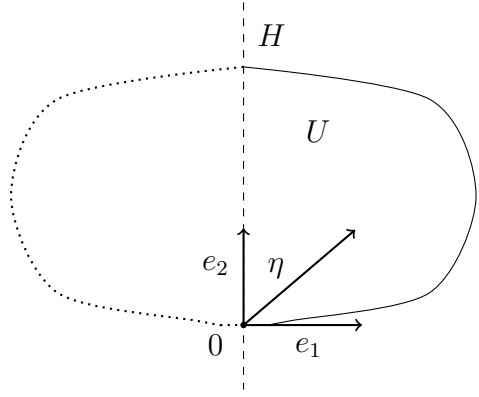

\begin{lemma}\label{scaling}
Let $K\subset \R^d$ be a bounded domain, which has $C^2$ boundary and it is starshaped with respect to a ball $B_r(x_0)$ for some $r > 0$ and $x_0 \in K$. Let $\lambda>0$ and let $(u_{\lambda},\Omega_{\lambda})$ be the unique solution of Problem \ref{bernoulli_problem} given by \cite[Theorem 1.6]{JKS2022}. For $t>0$ let $v: \R^d \to \R^d$ be given by $v(x)=u_{\lambda}(x/t)$ for $x\in \R^d$. Then $v$ is the unique solution of
\begin{equation*}
\left\{
\begin{aligned}
(-\Delta)^{1/2}v(x)&=0 &&\text{for $x \in t \Omega_{\lambda} \setminus t\overline{K}$,}\\
v(x)&=1 &&\text{for $x \in t\overline{K}$,}\\
v(x)&=0 &&\text{for $x \in (t\Omega_{\lambda})^c$,}\\
D_{t \Omega_{\lambda}}^{1/2} v(x) &= t^{-\frac12}\lambda&& \text{for $x \in \partial (t\Omega_{\lambda})$.}
\end{aligned}
\right.
\end{equation*}
That is, $v$ is the unique solution of Problem \ref{bernoulli_problem} with the respective scaled quantities. 
\end{lemma}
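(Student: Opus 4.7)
The proof is a direct verification that $v$ satisfies each of the four conditions of the scaled Bernoulli problem, followed by an appeal to Theorem \ref{Jarohs_Kulczycki_Salani} for uniqueness. The Dirichlet conditions $v=1$ on $t\overline{K}$ and $v=0$ on $(t\Omega_\lambda)^c$ are immediate from $v(x)=u_\lambda(x/t)$ and the corresponding conditions on $u_\lambda$; the real content lies in the equation $(-\Delta)^{1/2}v=0$ on $t\Omega_\lambda\setminus t\overline{K}$ and the boundary condition $D_{t\Omega_\lambda}^{1/2}v(x)=t^{-1/2}\lambda$ on $\partial(t\Omega_\lambda)$.

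For the equation, I would perform the change of variables $z=tw$ in the singular integral defining $(-\Delta)^{1/2}v$: the factor $|z|^{-(d+1)}$ together with $dz=t^d\,dw$ leaves a single surviving $t^{-1}$, which is the standard homogeneity of order one of the half Laplacian, giving
$$
(-\Delta)^{1/2}v(x)=t^{-1}(-\Delta)^{1/2}u_\lambda(x/t).
$$
The right-hand side vanishes whenever $x/t\in\Omega_\lambda\setminus\overline{K}$, i.e. whenever $x\in t\Omega_\lambda\setminus t\overline{K}$. For the boundary condition, I observe that $t\Omega_\lambda$ inherits its $C^\infty$ regularity from $\Omega_\lambda$ (guaranteed by Theorem \ref{Jarohs_Kulczycki_Salani}), and the inward unit normal $n$ to $\Omega_\lambda$ at $x/t$ coincides with the inward unit normal to $t\Omega_\lambda$ at $x$; a substitution $s=t\tau$ in the defining limit then yields
$$
D_{t\Omega_\lambda}^{1/2}v(x)=\lim_{s\to 0^+}\frac{u_\lambda(x/t+(s/t)n)}{s^{1/2}}=t^{-1/2}\lim_{\tau\to 0^+}\frac{u_\lambda(x/t+\tau n)}{\tau^{1/2}}=t^{-1/2}\lambda.
$$

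For uniqueness, $tK$ is still a bounded $C^2$ domain starshaped with respect to the ball $B_{tr}(tx_0)$, so Theorem \ref{Jarohs_Kulczycki_Salani} applied with fixed set $tK$ and Bernoulli parameter $t^{-1/2}\lambda$ provides a unique solution of the scaled problem, which must therefore coincide with $v$. There is no real obstacle here; the only point to highlight is that, because the generalized normal derivative uses the denominator $t^{1/2}$ (rather than $t$, as in the classical case), the new Bernoulli parameter is $t^{-1/2}\lambda$ rather than $t^{-1}\lambda$, mirroring the order $1/2$ of the underlying operator.
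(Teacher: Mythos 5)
Your argument is correct and is exactly the approach the paper takes: the paper's proof simply invokes ``the uniqueness statement in \cite[Theorem 1.6]{JKS2022} with the scaling properties of $(-\Delta)^{1/2}$'', and you have written out precisely those scaling computations (the homogeneity $(-\Delta)^{1/2}v(x)=t^{-1}(-\Delta)^{1/2}u_\lambda(x/t)$ and the $t^{-1/2}$ factor in $\Dn$) before applying the same uniqueness theorem to $tK$.
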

\begin{proof}
This follows immediately by the uniqueness statement in \cite[Theorem 1.6]{JKS2022} with the scaling properties of $(-\Delta)^{1/2}$.
\end{proof}

\begin{lemma}\label{boundary calc}
Let $0<r<R$, $x_0\in \R^d$, and let $b\in H^{1/2}(\R^d)\cap C^{1/2}(\R^d)$ be the solution to
$$
(-\Delta)^{\frac12}b=0\quad\text{in $W$},\quad b=1\quad \text{in $\overline{B_r(x_0)}$, and}\quad b=0\quad\text{in $B_R^c(x_0)$,}
$$
where $W = B_R(x_0)\setminus \overline{B_r(x_0)}$. Then $b$ is radially symmetric, strictly decreasing in the radial direction away from $x_0$, and there is a constant $C_d\in(0,1)$ depending only on $d$ such that
$$
\frac{C_d}{\sqrt{R-r}}\big(\frac{r}{R}\big)^{d-\frac12}\leq D_{W}^{1/2} b(\theta)< \frac{1}{\sqrt{R-r}}\quad\text{for any $\theta\in \partial B_R(x_0)$.}
$$
\end{lemma}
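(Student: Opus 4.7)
I will prove the lemma in three logical blocks. Both the equation and the interior/exterior data defining $b$ are invariant under rotations around $x_0$, and the solution to this Dirichlet problem for $(-\Delta)^{1/2}$ is unique by the comparison principle; hence $b$ is radially symmetric. For strict radial monotonicity I apply the moving planes method to $b$: given a unit vector $e$ and a hyperplane perpendicular to $e$ cutting through $W$, the antisymmetrized difference $v := b - b\circ Q$ is $\tfrac12$-harmonic on the far half of $W$ with $v\geq 0$ outside, and Proposition~\ref{prop:hopf2} forces $v>0$.

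\emph{Upper bound.} Taking $x_0 = 0$ and $\theta = Re_1$ without loss of generality, the function $\Phi(x) := \bigl((R - x_1)_+\bigr)^{1/2}/\sqrt{R-r}$ is $\tfrac12$-harmonic in the halfspace $\{x_1 < R\}\supset W$, because the one-variable function $t\mapsto (t)_+^{1/2}$ is $\tfrac12$-harmonic on the positive half-line. A direct check gives $\Phi\geq 1 = b$ on $\overline{B_r(0)}$ (since $x_1\leq r$ there) and $\Phi\geq 0 = b$ on $B_R(0)^c$, so the comparison principle yields $b\leq\Phi$ in $W$ and hence $D^{1/2}_W b(\theta)\leq 1/\sqrt{R-r}$. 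Since $\Phi(0) = \sqrt{R/(R-r)} > 1 = b(0)$, applying Proposition~\ref{prop:hopf} to a compactly supported modification of $\Phi-b$ at the boundary point $\theta$ (using the interior ball $B_\rho(\xi)\subset W$ constructed below) delivers the strict inequality.

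\emph{Lower bound.} Set $\rho := (R-r)/2$ and $\xi := \tfrac{R+r}{2R}\theta$, so that $B_\rho(\xi)\subset W$ is tangent to $\partial B_R(0)$ at $\theta$ and to $\partial B_r(0)$ at $(r/R)\theta$. Since $b$ is $\tfrac12$-harmonic on $B_\rho(\xi)$ with $b\equiv 1$ on $\overline{B_r(0)}$ and $b\geq 0$ elsewhere, the Poisson representation on the ball yields
$$
b(x) \;\geq\; c_d \int_{B_r(0)} \left(\frac{\rho^2 - |x-\xi|^2}{|y-\xi|^2 - \rho^2}\right)^{1/2}\frac{dy}{|x-y|^d}, \qquad c_d = \frac{\Gamma(d/2)}{\pi^{d/2+1}}.
$$
Along the inward normal $x = \theta - t\theta/R$ one has $\rho^2 - |x-\xi|^2 = t(2\rho - t)$; dividing by $\sqrt t$ and sending $t\to 0^+$ reduces the claim to a lower bound for
$$
I := \int_{B_r(0)} \frac{dy}{|\theta-y|^d\,\sqrt{|y-\xi|^2-\rho^2}}.
$$
Writing $y = (r-\sigma, y_\perp)$ in local coordinates at the tangency point $re_1$, the factorization $|y-\xi|^2 - \rho^2 = \sigma^2 + 2\rho\sigma + |y_\perp|^2$ (verified by direct computation, equivalently $(R-y_1)(r-y_1)+|y_\perp|^2$) allows the $\sigma$-integral to be computed explicitly; the remaining $y_\perp$-integral is then estimated separately in the regimes $r\leq\rho$ and $r\geq\rho$, and in both cases combines with the factor $\sqrt{2\rho - t}\to \sqrt{R-r}$ to produce the claimed lower bound $C_d(r/R)^{d-1/2}/\sqrt{R-r}$.

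The principal obstacle is this sharp two-regime estimate of $I$. The integrand has an integrable singularity at the tangency point $re_1$, and the relative sizes of $r$ and $\rho$ govern how much of that singularity is captured inside $B_r(0)$; extracting precisely the exponent $d-\tfrac12$ in the $(r/R)$-factor is what forces the split into the two regimes.
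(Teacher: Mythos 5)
Your opening two blocks are essentially the same as the paper's argument, up to presentation: for radial symmetry and strict monotonicity you argue from uniqueness plus rotational invariance and then moving planes, whereas the paper simply invokes \cite[Theorem~1.2]{SV19}; either is fine. Your upper bound reproduces the paper's barrier exactly: the function $\Phi(x)=(R-r)^{-1/2}\big((R-x_1)_+\big)^{1/2}$, the comparison on $\overline{B_r}$ and $B_R^c$, and Hopf's lemma for strictness. You are in fact slightly more careful than the paper in noting that $\Phi-b\notin H^{1/2}(\R^d)$, so a truncation is needed before Proposition~\ref{prop:hopf} can be applied. Your lower-bound setup is also identical to the paper's: the interior tangent ball $B_\rho(\xi)$ with $\rho=(R-r)/2$ and $\xi=\tfrac{R+r}{2}e_1$, the Poisson representation on that ball, the restriction to the contribution from $B_r(x_0)$, and the limit $t\to0^+$ producing the integral $I$.

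The gap is in the last, and hardest, step: showing $I\geq \tilde C_d\,(r/R)^{d-1/2}/(R-r)$, which is the entire content of the quantitative lower bound and where the exponent $d-\tfrac12$ is extracted. You assert that, after the change of variables $y=(r-\sigma,y_\perp)$ and the factorization $|y-\xi|^2-\rho^2=\sigma^2+2\rho\sigma+|y_\perp|^2$, ``the $\sigma$-integral [can] be computed explicitly'' and the remaining $y_\perp$-integral is handled by a two-regime split. But the $\sigma$-integral,
$$
\int_{\sigma_-}^{\sigma_+}\frac{d\sigma}{\bigl((R-r+\sigma)^2+|y_\perp|^2\bigr)^{d/2}\,\bigl(\sigma^2+(R-r)\sigma+|y_\perp|^2\bigr)^{1/2}}\,,
$$
has two distinct quadratics in the denominator raised to the powers $d/2$ and $1/2$, with $y_\perp$-dependent endpoints $\sigma_\pm=r\pm\sqrt{r^2-|y_\perp|^2}$; there is no usable closed form for general $d\geq2$. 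To make this tractable one must first restrict the domain and crudely bound the integrand, which is precisely what the paper does: it shrinks $|z'|<\sqrt{(z_1-2\rho)(r+R-z_1)}$ to $|z'|<z_1-2\rho$, restricts $z_1\leq R$, passes to $(d-1)$-spherical coordinates, substitutes $t=\tau x$ and then $\tau=2\rho y$, and estimates the resulting one-dimensional integrals; no case split $r\lessgtr\rho$ is needed. Your proposal names the right target and the right change of variables, and even flags this estimate as ``the principal obstacle,'' but it does not demonstrate a path through it; as stated, the claim of an explicit $\sigma$-integral is not correct, and the two-regime decomposition is only asserted. That missing computation is the substance of the lemma for $d\geq 2$.
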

\begin{proof}
First note that the symmetry and monotonicity properties of $b$ follow immediately from \cite[Theorem 1.2]{SV19}. {{The fact that $D_{W}^{1/2} b(\theta)$ is well defined for any $\theta\in \partial B_R(x_0)$ follows from Lemma \ref{C12}.}} In the following, we can assume $x_0=0$, without loss of generality, and consider only the normal derivative in the direction $e_1=(1,0,\ldots,0)\in \R^d$. That is, we set $\theta=Re_1$. For the upper bound, we consider the half space $H=\{x\in \R^d\;:\; x_1<R\}$. We abbreviate $B_R(0)$ to $B_R$ and $B_r(0)$ to $B_r$. Note that $B_{R}\subset H$ and $\partial H\cap \partial B_R=\{Re_1\}$. Moreover, recall the function
$$
v:\R^d\to\R, \quad v(x)=\left\{\begin{aligned} &0, && x\in \R^d\setminus H;\\
&(R-x_1)^{\frac{1}{2}}, && x\in H
\end{aligned}\right.
$$
which satisfies
$$
(-\Delta)^{\frac12}v=0\quad\text{in $H$,}\quad v=0 \quad\text{in $\R^d\setminus H$.}
$$
A simple observation gives
$$
\inf_{\substack{x\in \R^d\\ x\cdot e_1<r}} v(x)=v(re_1)=(R-r)^{\frac12}.
$$
Thus the function $\tilde{v}:\R^d\to\R$, $\tilde{v}(x)=(R-r)^{-\frac12}v(x)$ satisfies
$$
(-\Delta)^{\frac12}\tilde{v}=0\quad\text{in $W$},\quad \tilde{v}\geq 0 \quad\text{in $\R^d\setminus B_R$, and}\quad \tilde{v}\geq 1\quad\text{in $B_r$.}
$$
The fractional Hopf Lemma applied to $\tilde{v}-b$ implies $D_{W}^{1/2} (\tilde{v}-b)(Re_1)>0$ and thus
$$
D_{W}^{1/2} b(Re_1)< D_{W}^{1/2} \tilde{v}(Re_1)=\frac{1}{\sqrt{R-r}}\lim_{t\to0}\frac{(R-((R-t))^{\frac12}}{\sqrt{t}}=\frac{1}{\sqrt{R-r}}.
$$
Next, let $y_0=\frac{R+r}{2}e_1$ and let $\rho=\frac{R-r}{2}$, so that $B_{\rho}(y_0)\subset B_{R}\setminus B_r$ and $\partial B_{\rho}(y_0)\cap \partial B_{R}=\{Re_1\}$, cf. Figure \ref{fig for concentric balls}. Recall that the Poisson kernel of $(-\Delta)^{\frac12}$ in $B_{\rho}(y_0)$ is given by
\begin{figure}
\begin{center}
\begin{tikzpicture}[x=1cm,y=1cm,step=1cm]
\draw [line width=0.3mm]  (11,3) circle (30pt);
\draw [line width=0.3mm]  (11,3) circle (90pt);
\draw [dotted, line width=0.3mm] (13.108,3) circle (30pt);
\draw [dotted, line width=0.2mm] (11,3) -- (14.17,3);
\filldraw (11,3) circle (1pt);
\node at (11.3,2.8) {$0$};
\filldraw (12.05,3) circle (1pt);
\node at (12.35,2.8) {$re_1$};
\filldraw (13.108,3) circle (1pt);
\node at (13.408,2.8) {$y_0$};
\filldraw (14.17,3) circle (1pt);
\node at (14.5,2.8) {$Re_1$};
\node at (13.4,3.6) {$B_{\rho}(y_0)$};
\end{tikzpicture}
\end{center}
\caption{Definition of $B_{\rho}(y_0)$ with $x_0=0$.}
\label{fig for concentric balls}
\end{figure}
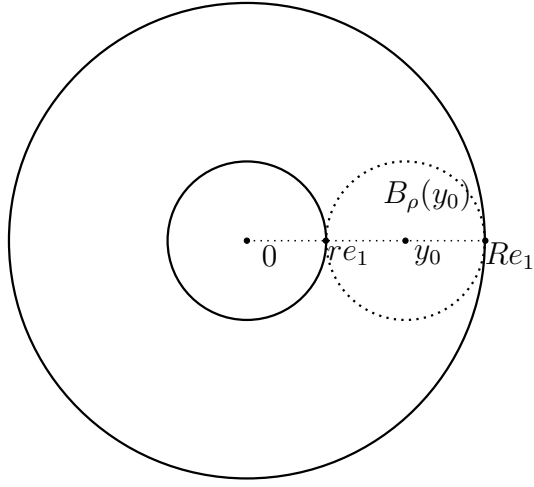

\begin{align*}
P(x,y)=c\frac{(\rho^2-|x-y_0|^2)^{\frac12}}{(|y-y_0|^2-\rho^2)^{\frac12}}|x-y|^{-d},\quad x\in B_{\rho}(y_0),\ y\in \R^d\setminus \overline{B_{\rho}(y_0)}
\end{align*}
with $c= \Gamma(\frac{d}{2})  \pi^{-1-\frac{d}{2}}$. Thus we have for $x\in B_{\rho}(y_0)$
$$
b(x)=\int_{\R^d\setminus B_{\rho}(y_0)}P(x,y)b(y)\,dy
$$
and 
\begin{align}
D_{W}^{1/2} b(Re_1)&=\lim_{t\to 0^+}\frac{b((R-t)e_1)}{\sqrt{t}}=\lim_{t\to 0^+}t^{-\frac12}\int_{\R^d\setminus B_{\rho}(y_0)}P((R-t)e_1,y)b(y)\,dy\notag\\
&=c\lim_{t\to0^+}\int_{\R^d\setminus B_{\rho}(y_0)}\frac{(\rho^2-|(R-t)e_1-y_0|^2)^{\frac12}}{\sqrt{t}(|y-y_0|^2-\rho^2)^{\frac12}}\frac{b(y)}{|(R-t)e_1-y|^d}\,dy\notag\\
&>c\lim_{t\to0^+}\int_{B_r}\frac{(\rho^2- (\rho -t)^2)^{\frac12}}{\sqrt{t}(|y-y_0|^2-\rho^2)^{\frac12}}\frac{1}{|(R-t)e_1-y|^d}\,dy\notag\\
&=c\lim_{t\to0^+}\int_{B_r((R-t)e_1)}\frac{(2\rho -t)^{\frac12}}{(|(\rho-t)e_1-z|^2-\rho^2)^{\frac12}}\frac{1}{|z|^d}\,dz\notag\\
&=c \int_{B_r(Re_1)}\frac{(2\rho)^{\frac12}}{(|z|^2-2\rho z_1)^{\frac12}}\frac{1}{|z|^d}\,dz.\label{form1}
\end{align}

If $d=1$, it follows that 
\begin{align*}
D_{W}^{1/2} b(Re_1)&>c \int_{R-r}^{R+r}\frac{(2\rho)^{\frac12}}{(z^2-2\rho z)^{\frac12}}\frac{1}{z}\,dz=\frac{1}{\pi} \int_{\frac{R-r}{2\rho}}^{\frac{R+r}{2\rho}}\frac{(2\rho)^{\frac12}}{(4\rho^2\tau^2-4\rho^2 \tau)^{\frac12}}\frac{2\rho}{2\rho \tau}\,d\tau\\
&=\frac{1}{\pi \sqrt{2\rho}} \int_{1}^{\frac{R+r}{R-r}}\frac{1}{\tau (\tau^2-\tau)^{\frac12}}\,d\tau=\frac{1}{\pi \sqrt{2\rho}} \frac{2\sqrt{\tau-1}}{\sqrt{\tau}}\Bigg|_{1}^{\frac{R+r}{R-r}}\\
&=\frac{\sqrt{2}}{\pi \sqrt{\rho}}\cdot \frac{\sqrt{R+r-(R-r)}}{\sqrt{R+r}}=\frac{2}{\pi\rho^{\frac12}}\cdot \frac{\sqrt{r}}{\sqrt{R+r}}\geq\frac{\sqrt{2}}{\pi \sqrt{\rho}}\sqrt{\frac{r}{R}}
\end{align*}
and the claim follows for $d=1$. If $d\geq 2$, first note that 
$$
(x_1-R)^2+|x'|^2\leq r^2\quad\Leftrightarrow\quad |x'|^2\leq r^2-(x_1-R)^2=(x_1-2\rho)(r+R-x_1).
$$
Since, for $x_1\in[0,R]$ we have
$$
(x_1-2\rho)\leq r+R-x_1,
$$
using $(d-1)$ dimensional spherical coordinates, from \eqref{form1} it follows that
\begin{align}
D_{W}^{1/2} b(Re_1)&>c \int_{B_r(Re_1)}\frac{(2\rho)^{\frac12}}{(z_1^2-2\rho z_1+|z'|^2)^{\frac12}}\frac{1}{(z_1^2+|z'|^2)^{\frac{d}{2}}}\,dz\notag\\
&\geq c \int_{2\rho}^{R}\int_{B_{\sqrt{(\tau-2\rho)(r+R-\tau)}}}\frac{(2\rho)^{\frac12}}{(\tau^2-2\rho \tau+|z'|^2)^{\frac12}}\frac{1}{(\tau^2+|z'|^2)^{\frac{d}{2}}}dz'\,d\tau \notag\\
&\geq c \int_{2\rho}^{R}\int_{B_{\tau-2\rho}}\frac{(2\rho)^{\frac12}}{(\tau^2-2\rho \tau+|z'|^2)^{\frac12}}\frac{1}{(\tau^2+|z'|^2)^{\frac{d}{2}}}dz'\,d\tau \notag\\
&=\frac{\Gamma(\frac{d}{2})2\pi^{\frac{d-1}{2}}}{\Gamma(\frac{d-1}{2})\pi^{1+\frac{d}{2}}} \int_{2\rho}^{R}\int_{0}^{\tau-2\rho}\frac{(2\rho)^{\frac12}}{(\tau^2-2\rho \tau+t^2)^{\frac12}}\frac{t^{d-2}}{(\tau^2+t^2)^{\frac{d}{2}}}dt\,d\tau =:A\label{d geq 2 case step1}\notag
\end{align}
Substituting $t=\tau x$ and then $\tau=2\rho y$ we get
\begin{align*}
A
&=\frac{\Gamma(\frac{d}{2})2(2\rho)^{-\frac12}}{\Gamma(\frac{d-1}{2})\pi^{\frac{3}{2}}} \int_{1}^{\frac{R}{2\rho}}\int_{0}^{1-\frac{1}{y}}\frac{1}{y(y^2(1+x^2)-y)^{\frac12}}\frac{x^{d-2}}{(1+x^2)^{\frac{d}{2}}}dx\,dy\notag\\
&\geq \frac{\Gamma(\frac{d}{2})2(2\rho)^{-\frac12}}{\Gamma(\frac{d-1}{2})\pi^{\frac{3}{2}}} \int_{1}^{\frac{R}{2\rho}}\frac{1}{y\Big(y^2(1+(1-\frac{1}{y})^2)-y\Big)^{\frac12}}\int_{0}^{1-\frac{1}{y}} \frac{x^{d-2}}{(1+x^2)^{\frac{d}{2}}}\,dx\,dy\notag\\
&\geq \frac{\Gamma(\frac{d}{2})2(2\rho)^{-\frac12}}{2^{\frac{d+1}{2}}\Gamma(\frac{d-1}{2})\pi^{\frac{3}{2}}} \int_{1}^{\frac{R}{2\rho}}\frac{1}{y^{\frac32}\sqrt{y-1}}\int_{0}^{\frac{y-1}{y}} x^{d-2}\,dx\,dy\notag\\
&=\frac{\Gamma(\frac{d}{2})(2\rho)^{-\frac12}}{2^{\frac{d+1}{2}}\Gamma(\frac{d+1}{2})\pi^{\frac{3}{2}}} \int_{1}^{\frac{R}{2\rho}}(y-1)^{d-\frac{3}{2}}y^{-\frac12-d}\,dy=\frac{\Gamma(\frac{d}{2})(2\rho)^{-\frac12}}{2^{\frac{d+1}{2}}\Gamma(\frac{d+1}{2})\pi^{\frac{3}{2}}}\Bigg(\frac{2}{2d-1}\Big(\frac{y}{y-1}\Big)^{\frac12-d}\Bigg)\Bigg|_{1}^{\frac{R}{2\rho}}\notag\\
&=\frac{\Gamma(\frac{d}{2}) }{2^{\frac{d-1}{2}}(2d-1)\Gamma(\frac{d+1}{2})\pi^{\frac{3}{2}}\sqrt{R-r}}\Big(\frac{r}{R}\Big)^{d-\frac12}.
\end{align*}
The claim thus also holds for $d\geq 2$.
\end{proof}

\begin{rem}
From the proof it follows that in the case of $d=1$ (and with $x_0=0$), the upper bound can easily be improved. With the notation in the proof it holds for $d=1$:
\begin{align*}
D_{W}^{1/2} b(Re_1)&=D_{W}^{1/2} b(R)=\lim_{t\to 0^+}\frac{b((R-t))}{\sqrt{t}}=\lim_{t\to 0^+}t^{-\frac12}\int_{\R\setminus (y_0-\rho,y_0+\rho)}P((R-t)e_1,y)b(y)\,dy\notag\\
&=\frac{1}{\pi}\lim_{t\to0^+}\int^{\frac{R+r}{2}-\rho}_{-R}\frac{(\rho^2-((R-t)-\frac{R+r}{2})^2)^{\frac12}}{\sqrt{t}((y-\frac{R+r}{2})^2-\rho^2)^{\frac12}}\frac{b(y)}{|(R-t)-y|}\,dy\notag\\
&=\frac{\sqrt{2\rho}}{\pi}\int_{\rho-\frac{R+r}{2}+R}^{2R}\frac{b(R-y)}{|y| ((\frac{R+r}{2}-R+y)^2-\rho^2)^{\frac12}}\,dy\notag\\
&=\frac{\sqrt{2\rho}}{\pi}\int_{2\rho}^{2R}\frac{b(R-y)}{|y| (y-\rho)^2-\rho^2)^{\frac12}}\,dy=
\frac{\sqrt{2\rho}}{\pi}\int_{2\rho}^{2R}\frac{b(R-y)}{y(y^2-2\rho y)^{\frac12}}\,d y \notag\\
&=\frac{\sqrt{2\rho}}{\pi}\int_{1}^{R/\rho}\frac{b(R-2\rho t)2\rho}{2\rho t (4\rho^2t^2-4\rho^2 t)^{\frac12}}\,dt=\frac{1}{\pi\sqrt{2\rho}}\int_{1}^{R/\rho}\frac{b(R-2\rho t)}{t ( t^2-t)^{\frac12}}\,dt.
\end{align*}
Since $b \le 1$ we get
\begin{align*}
D_{W}^{1/2} b(Re_1)&\leq \frac{1}{\pi\sqrt{2\rho}}\int_{1}^{\frac{R}{\rho}}\frac{1}{t^{\frac32} ( t-1)^{\frac12}}\,dt=\frac{\sqrt{2}}{\pi\sqrt{\rho}}\frac{\sqrt{t-1}}{\sqrt{t}}\Bigg|_{1}^{\frac{R}{\rho}} =\frac{\sqrt{2R-2\rho}}{\pi\sqrt{\rho R}}=\frac{\sqrt{R+r}}{\pi\sqrt{\rho  R}}.
\end{align*}
Thus, for $d=1$ it holds
\begin{equation}\label{precise N=1}
\frac{2}{\pi \sqrt{R-r}} \sqrt{\frac{r}{R}}\leq D_W^{1/2} b(\pm R)=\lim_{t\to 0^+}\frac{b(\pm R\mp t)}{\sqrt{t}}\leq \frac{\sqrt{2}}{\pi\sqrt{R-r}}\sqrt{1+\frac{r}{R}}.
\end{equation}
\end{rem}

\section{Proofs of the main results}

\begin{proof}[Proof of Theorem \ref{family_D_lambda}]
We assume, without loss of generality, that $x_0=0$.  {Let $u_i:=u_{\lambda_i}$, $\Omega_i:=\Omega_{\lambda_i}$, for $i=1,2$,  be the solution to Problem \ref{bernoulli_problem} with $\lambda=\lambda_i$ in $K$.} In the following, for $\tau>0$ let $v_\tau,w_{\tau}:\R^d\to\R$ be given by $v_\tau(x)=u_2(x/\tau)$ and $w_{\tau}(x)=u_1(x/\tau)$.\\
(i): Let $t>0$ be the largest number such that $(t\Omega_2)\subset \Omega_1$. Notice that for this $t$ we can find some $\theta\in \partial(t\Omega_2)\cap \partial \Omega_1$. Assume $t<1$. Then by the regularity of the free boundary $\Omega_1$ and $\Omega_2$ respectively, we can find a ball $B$ contained in $t\Omega_2\setminus \overline{K}$ such that $\partial B\cap \partial (t\Omega_2)=\{\theta\}$. Since $tK\subset K$, by the fractional Hopf Lemma, Proposition \ref{prop:hopf}, applied to $u_1-v_t$ in $t\Omega_2\setminus \overline{K}$ it follows that either we have $u_1-v_t\equiv 0$ or $D_{t\Omega_2}^{1/2} (u_1-v_t)(\theta)=\lambda_1-t^{-\frac12}\lambda_2>0$. Notice that since $t\in(0,1)$ by assumption and $\lambda_2>\lambda_1$, we must have $u_1\equiv v_t$. But by the maximum principle we also have $v_t<1=u_1$ in $K\setminus (t \overline{K})$. A contradiction. Thus we must have $t\geq 1$ and (i) follows.\\
(ii): Assume, without loss of generality, $\lambda_2>\lambda_1$ and let $s$ be the largest number such that $s\Omega_1\subset \Omega_2$ and let $\theta\in \partial(s\Omega_1)\cap \partial \Omega_2$. By (i) we have $s\in(0,1]$ and thus $\triangle(\Omega_1,\Omega_2)=|\ln s|$.
To estimate $s$, note that analogously to the proof of (i) we have by the fractional Hopf Lemma, applied to $u_2-w_s$ in $s\Omega_1\setminus \overline{K}$, that either $u_2- w_s\equiv 0$ or $D_{s\Omega_1}^{1/2} (u_2-w_s)(\theta)=\lambda_2-s^{-\frac12}\lambda_1>0$. Since $u_2\equiv w_s$ is not possible, because $s<1$ and thus $sK\subsetneq K$, it holds $s^{\frac12}\lambda_2>\lambda_1$, hence, in particular,
$$
0\geq\ln(s)\geq \ln(\lambda_1^2/\lambda_2^2)=2\Big(\ln(\lambda_1)-\ln(\lambda_2)\Big),
$$
that is,
$$
\triangle(\Omega_1,\Omega_2)\leq 2\big|\ln(\lambda_1)-\ln(\lambda_2)\big|
$$
as claimed in (ii).\\
(iii): Next, let $\xi_0\in\partial K$ and $\theta\in\partial \Omega_\lambda$ such that $\dist(\xi_0,\theta)=\dist(\partial \Omega_\lambda,\partial K)$. By the interior ball property, there exists a ball $B_{r_K}(x_0)$ of radius $r_K$ (we denote its center as $x_0$) such that $B_{r_K}(x_0)\subset K$ and $\xi_0\in\partial B_{r_K}(x_0)\cap\partial K$.  Let $B_R(x_0)$ be the ball with center $x_0$ and radius $R=r_K+\dist(\partial \Omega_\lambda,\partial K)$.  Notice that $B_R(x_0)\subset\Omega_\lambda$ and $\theta\in \partial \Omega_{\lambda}\cap \partial B_R(x_0)$. Let $b=b_{\lambda}$ be the solution to 
$$
(-\Delta)^{1/2}b=0\quad\text{in $W$,}\quad b=1\quad\text{in $\overline{B_{r_K}(x_0)}$, and}\quad b=0 \quad\text{in $B_R^c(x_0)$}
$$
where $W = B_R(x_0)\setminus \overline{B_{r_K}(x_0)}$. Then the comparison principle yields $b\leq u_{\lambda}$ in $\R^d$. Since $b(\theta)=u_{\lambda}(\theta)=0$, we have 
$$
D_{W}^{1/2} b(\theta)\leq D_{\Omega_{\lambda}}^{1/2} u_{\lambda}(\theta) = \lambda.
$$
Moreover, Lemma \ref{boundary calc} guarantees the existence of some constant $C>0$ depending on $d$ such that
$$
\frac{C}{\sqrt{\dist(\partial \Omega_{\lambda},\partial K)}}\left(\frac{r_K}{R}\right)^{d-\frac12}\leq D_{W}^{1/2} b(\theta)\,.
$$
Similarly, let $m:=\inf\{\rho>0\;:\; K\subset B_{\rho}(x_0)\}$, $M:=\inf\{\sigma>0\::\; \Omega_{\lambda}\subset B_\sigma(x_0)\}$, and $\beta=\beta_{\lambda}$ the unique solution to
$$
(-\Delta)^{1/2}\beta=0\quad\text{in $U$,}\quad b=1\quad\text{in $\overline{B_m(x_0)}$, and}\quad b=0 \quad\text{in $B_M^c(x_0)$.}
$$
where $U = B_M(x_0)\setminus \overline{B_{m}(x_0)}$. Then the comparison principle yields $\beta\geq u_{\lambda}$ in $\R^d$, whence, if $\phi\in \partial B_M(x_0)\cap \partial \Omega_{\lambda}$, we get
$$
D_{U}^{1/2} \beta(\phi)\geq  D_{\Omega_{\lambda}}^{1/2} u_{\lambda}(\phi) = \lambda\,.
$$
Using again Lemma \ref{boundary calc}, we also obtain
$$
D_{U}^{1/2} \beta(\phi)\leq \frac{1}{\sqrt{M-m}}.
$$
Since $M-m\geq R-r_K=\dist(\partial \Omega_{\lambda},\partial K)$, putting together the last four inequalities, we obtain
$$
\frac{C}{\sqrt{\dist(\partial \Omega_{\lambda},\partial K)}}\left(\frac{r_K}{r_k+\dist(\partial \Omega_{\lambda},\partial K)}\right)^{d-\frac12}\leq D_{W}^{1/2} b(\theta)\leq \lambda\leq 
D_{U}^{1/2} \beta(\phi)\leq \frac{1}{\sqrt{\dist(\partial \Omega_{\lambda},\partial K)}}.
$$
Thus
\begin{equation}\label{bound on distance}
	C^2 \left(\frac{r_K}{r_K+\dist(\partial \Omega_{\lambda},\partial K)}\right)^{2d-1}\frac{1}{\lambda^2}\leq \dist(\partial \Omega_{\lambda},\partial K)\leq \frac{1}{\lambda^2}.
\end{equation}
The right inequality in (iv) is proved. Moreover, we note that sending $\lambda\to\infty$, this implies $\dist(\partial \Omega_{\lambda},\partial K)\to 0$, so also the first assertion in  (iii) is proved.\\
Next, note that the first inequality on \eqref{bound on distance} gives
\begin{equation}
	\label{d2}\dist(\partial \Omega_{\lambda},\partial K)^{1/(2d-1)}\big(r_K+\dist(\partial \Omega_{\lambda},\partial K)\big)\ge\frac{C^{2/(2d-1)} r_K}{\lambda^{2/(2d-1)}}\,.
\end{equation}
This shows the second assertion of (iii).\\
(iv): Now, set 
$$
t=\frac{\dist(\partial \Omega_{\lambda},\partial K)^{1/(2d-1)}}{r_K^{1/(2d-1)}}\,,
$$
then \eqref{d2} reads
\begin{equation}\label{h0}
	h(t):=t^{2d}+t-A\ge0\,,
\end{equation}
where
$$
A=\left(\frac{C^2}{r_K\lambda^{2}}\right)^{1/(2d-1)}\,.
$$
Notice that $h$ is $C^\infty(\R)$,  $h(0)<0$ and $\lim_{t\to+\infty}h(t)=+\infty$. Since $h$ is strictly increasing for $t\geq0$, we have that there exists exactly one $t(A)>0$ such that $h(t(A))=0$ and \eqref{h0} is equivalent to $t\geq t(A)$ . Put
\begin{equation}\label{gt}
	g_{d,r_K}(\lambda)=r_K{t}(A)^{2d-1}\,.
\end{equation}
To obtain estimate \eqref{gdrk}, observe that
$$
h\left(A\right)=A^{2d}>0\quad\text{and}\quad h\left(A^{1/2d}\right)=A^{1/2d}>0\,,
$$
hence 
$$
t(A)<\min\{A,A^{1/2d}\}\,,
$$
and there exists $\epsilon<1$ such that
$$
t(A)=\epsilon\min\{A,A^{1/2d}\}\,.
$$
Let's look for an estimate from below of $\epsilon$ and calculate
\begin{align*}
h(\epsilon\min\{A,A^{1/2d}\})&=\epsilon^{2d}\min\{A,A^{1/2d}\}^{2d}+\epsilon\min\{A,A^{1/2d}\}-A\\
&<\epsilon(\min\{A,A^{1/2d}\}^{2d}+\min\{A,A^{1/2d}\})-A\,,
\end{align*}
and we see that, choosing
$$
\epsilon=\frac{A}{\min\{A,A^{1/2d}\}(\min\{A,A^{1/2d}\}^{2d-1}+1)}\,,
$$
we have $h(\epsilon\min\{A,A^{1/2d}\})<0$, which yields
$$
t(A)>\frac{A}{\min\{A,A^{1/2d}\}^{2d-1}+1}\,.
$$
The estimate \eqref{gdrk} follows from the latter coupled with \eqref{gt}. 
\end{proof}

Now our aim is to show Theorem \ref{inward_normal_ray}. We will use some ideas from \cite{S1971} and \cite{S1994}. First we need some auxiliary lemmas.  {For this recall the notions of halfspaces $H_{\lambda,e}$ and reflections $Q_{\lambda,e}$ introduced in the beginning of Section \ref{preliminearies}.}

\begin{lemma}\label{psi_lemma}
	Assume $\Omega\subset \R^d$ is an open set with $C^2$ boundary and let $H=\{x\in \R^d\;:\; x_1>0\}$. Assume $0\in \partial \Omega$ is such that $e_2=(0,1,0,\ldots,0)$ is the interior normal of $\Omega$ at $0$. Let $\delta(x)=\dist(x,\R^d\setminus \Omega)$, $\eps > 0$ and $u$ be a function belonging to $C^s(B_{\eps}(0))$ for some $s\in(0,1)$ and satisfying
	\begin{enumerate}[(i)]
		\item $u(x)=0$ for $x \in B_{\eps}(0) \setminus \Omega$,
		\item $u(x)= \delta^{s}(x) \psi(x)$ for $x \in \overline{\Omega} \cap B_{\eps}(0)$, where $\psi \in C^{1}(\overline{\Omega \cap B_{\eps}(0)})$ 
		\item $\psi\equiv c$ on $\partial \Omega \cap B_{\eps}(0)$.
	\end{enumerate}
	Let $\eta=(1,1,0,\ldots,0)$. Then $v:=\bar{u}-u=u\circ Q_{0,e_1}-u$ satisfies
	\begin{equation}
		v(t\eta)=o(t^{1+s})\quad\text{as $t\to 0^+$.}
	\end{equation}
\end{lemma}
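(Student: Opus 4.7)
The plan is to expand $v(t\eta)=u(Q_{0,e_1}(t\eta))-u(t\eta)$ using the structure $u=\delta^s\psi$ near $0$, and to show by a reflection-cancellation argument in the $x_1$-variable that the two resulting error terms are both $o(t^{1+s})$.

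First I would observe that for all sufficiently small $t>0$ both $t\eta$ and $Q_{0,e_1}(t\eta)=(-t,t,0,\ldots,0)$ lie in $\Omega\cap B_\eps(0)$. Indeed, writing $\partial\Omega$ locally near $0$ as a graph $\{x_2=\phi(x_1,x')\}$ with $\phi\in C^2$, $\phi(0)=0$, $\nabla\phi(0)=0$ (this uses that $e_2$ is the interior normal at $0$), the second coordinate $t$ exceeds $\phi(\pm t,0,\ldots,0)=O(t^2)$. Writing $Q=Q_{0,e_1}$ for brevity, assumption (ii) then gives the decomposition
\[
v(t\eta)=\bigl[\delta^s(Q(t\eta))-\delta^s(t\eta)\bigr]\psi(t\eta)+\delta^s(Q(t\eta))\bigl[\psi(Q(t\eta))-\psi(t\eta)\bigr]=:A(t)+B(t).
\]

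For $B(t)$ I would use assumption (iii) together with a $C^1$-Taylor expansion. Differentiating the identity $\psi(y_1,\phi(y_1,y'),y')=c$ in $y_1$ at $y=0$ and using $\partial_1\phi(0)=0$ yields $\partial_1\psi(0)=0$; hence
\[
\psi(t\eta)-\psi(Q(t\eta))=\nabla\psi(0)\cdot(2t,0,\ldots,0)+o(t)=2t\,\partial_1\psi(0)+o(t)=o(t).
\]
Combined with $\delta(Q(t\eta))=O(t)$ this gives $B(t)=O(t^s)\cdot o(t)=o(t^{1+s})$.

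For $A(t)$ I would use that the signed distance function $\tilde\delta$ (positive in $\Omega$) is $C^2$ in a tubular neighborhood of $\partial\Omega$ whenever $\partial\Omega\in C^2$; in particular $\delta=\tilde\delta$ on $\Omega\cap B_\eps(0)$ and $\tilde\delta$ admits a second-order Taylor expansion at $0$ with $\tilde\delta(0)=0$, $\nabla\tilde\delta(0)=e_2$. The crucial point is the vanishing of the mixed partial $\partial_{12}^2\tilde\delta(0)$: for $x_2>0$ sufficiently small the nearest point of $\partial\Omega$ to $(0,x_2,0,\ldots,0)$ is the origin itself, since $(y_1,y')=0$ is a critical point of the squared-distance functional over the graph (using $\nabla\phi(0)=0$) and a global minimum for small $x_2$; hence $\nabla\tilde\delta(0,x_2,0,\ldots,0)$ equals the unit inward normal at the origin, namely $e_2$. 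Therefore $\partial_1\tilde\delta$ vanishes identically along this segment, and differentiation in $x_2$ yields $\partial_{12}^2\tilde\delta(0)=0$. The $x_1$-antisymmetric part of the Taylor expansion then vanishes, giving
\[
\delta(t\eta)-\delta(Q(t\eta))=2t^2\,\partial_{12}^2\tilde\delta(0)+o(t^2)=o(t^2).
\]
Since $\delta(t\eta),\delta(Q(t\eta))=t+O(t^2)$, the mean value theorem applied to $r\mapsto r^s$ yields $\delta^s(t\eta)-\delta^s(Q(t\eta))=s\xi^{s-1}\,o(t^2)=o(t^{1+s})$ for some $\xi$ between the two distances; boundedness of $\psi$ then gives $A(t)=o(t^{1+s})$.

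Combining the two estimates produces $v(t\eta)=o(t^{1+s})$, as claimed. The main obstacle is the distance-function expansion, specifically the identity $\partial_{12}^2\tilde\delta(0)=0$, which relies on the $C^2$-regularity of $\tilde\delta$ in a tubular neighborhood of a $C^2$ hypersurface together with the geometric identity $\nabla\tilde\delta=n\circ p$, where $p$ denotes nearest-point projection onto $\partial\Omega$ and $n$ the inward unit normal. If one prefers to bypass the regularity of $\tilde\delta$, the same cancellation can be obtained by a slightly longer but elementary computation using the graph parametrization and the explicit first-order optimality conditions for the nearest-point projection.
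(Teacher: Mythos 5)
Your proof is correct and follows the same overall strategy as the paper's: decompose $v(t\eta)$ into the two pieces coming from the $\delta^s$ and $\psi$ factors, then reduce each to the vanishing of a single mixed derivative at the origin, namely $\partial_{12}\delta(0)=0$ and $\partial_1\psi(0)=0$. The same mean-value / Taylor bookkeeping then yields $o(t^{1+s})$ for each piece. So the route is essentially identical.

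Two points of comparison are worth recording. For $\partial_1\psi(0)=0$, the paper invokes Whitney's extension theorem to extend $\psi$ to a $C^1$ function on a full neighborhood of $0$ and then reasons about level sets of the extension; you instead differentiate the identity $\psi(y_1,\phi(y_1,y'),y')\equiv c$ directly along the $C^2$ graph of $\partial\Omega$, which is more elementary and avoids the extension theorem entirely. For $\partial_{12}\delta(0)=0$, your derivation is in fact more careful than the paper's as written: the paper bounds the difference quotient by $\max_{\tau\in[0,t]}|\nabla^2\delta(\tau e_2)[e_1]|$ and asserts this tends to $0$, but $\nabla^2\delta(0)e_1$ is the shape-operator image of the tangent vector $e_1$ and is generally a nonzero tangent vector (e.g. for a ball of radius $R$ tangent at $0$ one has $\nabla^2\delta(0)e_1=-e_1/R$). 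What is true, and what makes the argument work, is only that its $e_2$-component vanishes, i.e. $\partial_{12}\delta(0)=0$. Your argument proves exactly this via the geometric identity $\nabla\delta=n\circ p$ (gradient constant along the normal ray), which is the right statement and makes the cancellation transparent. So your write-up not only matches the paper's strategy but also patches a step that the paper states in a misleading form.
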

\begin{proof}
First of all,  notice that $\nabla \delta(0)=e_2$. We assume $t$ is sufficiently small such that $t\eta$ and $\overline{t\eta}=t\overline{\eta}$ belong to $\Omega$. Notice next that we have
\begin{eqnarray}
v(t\eta)&=&	u(t \overline{\eta}) - u(t\eta)\notag\\
 &=&  \delta^s(t \overline{\eta}) \psi(t\overline{\eta}) - \delta^s(t \eta) \psi(t\eta)\notag\\
	&=& (\delta^s(t \overline{\eta}) - \delta^s(t \eta)) \psi(t\overline{\eta}) +
	\delta^s(t \eta) (\psi(t\overline{\eta}) - \psi(t\eta))\label{small o estimate}
\end{eqnarray}	
We begin with the first summand. It holds by a Taylor expansion for $t\to 0^+$
\begin{eqnarray*}
\delta(t\eta)&=&\delta(0)+\nabla \delta(0)t\eta +\frac12 \nabla^2\delta(0)[t\eta](t\eta)+o(t^2)\\
&=& t+\frac{t^2}2\Big(\nabla^2\delta(0)[e_1](e_1)+2\nabla^2\delta(0)[e_1](e_2)+\nabla^2\delta(0)[e_2](e_2)\Big)+o(t^2),
\end{eqnarray*}	
where, since $\nabla \delta(0)\cdot e_1=e_2\cdot e_1=0$, 
$$
|\nabla^2\delta(0)[e_1](e_2)|= \lim_{t\to0}\frac{|\nabla \delta(te_2)\cdot e_1|}{t} \leq \lim_{t\to0}\max_{\tau\in[0,t]}|\nabla^2 \delta(\tau e_2)[e_1]|=0
$$
using $\delta\in C^2(\overline{\Omega})$. Thus
\begin{eqnarray*}
\delta(t\eta)&=& t+\frac{t^2}2\Big(\nabla^2\delta(0)[e_1](e_1)+\nabla^2\delta(0)[e_2](e_2)\Big)+o(t^2)
\end{eqnarray*}
 and similarly,
\begin{eqnarray*}
	\delta(t\overline{\eta})&=&t+\frac{t^2}2\Big(\nabla^2\delta(0)[e_1](e_1)+\nabla^2\delta(0)[e_2](e_2)\Big)+o(t^2).
\end{eqnarray*}	
Thus, for $t>0$, there is $\tau\in [0,1]$ such that with $x_t=\eta +\tau(\overline{\eta}-\eta)=(1-2\tau)e_1+e_2$ it holds for $t\to 0^+$
\begin{eqnarray*}
\delta^s(t \overline{\eta}) - \delta^s(t \eta)&=& s\delta^{s-1}(tx_{t})\Big(\delta(t\overline{\eta})-\delta(t\eta)\Big)\\
&=& s\delta^{s-1}(tx_{t})o(t^2)\\
&=& o(t^{1+s}),
\end{eqnarray*}
noting that $tx_t\to 0$ for $t\to0$ and that $\delta(tx_t)$ is comparable to $t$. Since $\psi(t\overline{\eta})=c+o(1)$ as $t\to 0^+$ by assumption, we have
$$
(\delta^s(t \overline{\eta}) - \delta^s(t \eta)) \psi(t\overline{\eta})=o(t^{1+s})\quad\text{as $t\to 0^+$}
$$	
which shows that the first summand in \eqref{small o estimate} behaves as claimed.\\
For the second summand in \eqref{small o estimate}, note first that there is $C>0$ such that
$$
C^{-1}t^{s}\leq \delta^s(t \eta)\leq C t^{s}\quad\text{for $t\geq 0$ small enough.}
$$
It thus remains to show that
\begin{equation}\label{estimate to be shown}
	\psi(t\overline{\eta}) - \psi(t\eta)=o(t)\quad\text{as $t\to 0^+$}.
\end{equation}
Similarly as above, there is, for every $t>0$ some $\tau\in [0,1]$ such that with $x_t=\eta +\tau(\overline{\eta}-\eta)=(1-2\tau)e_1+e_2$ it holds
\begin{eqnarray}
\psi(t\overline{\eta}) - \psi(t\eta)&=& t\nabla \psi(tx_t) \cdot (\overline{\eta}-\eta)\notag\\
	&=& -2t\partial_1\psi(tx_t).\label{estimate to be shown2}
\end{eqnarray}
In the following, let $\tilde{\psi}:\R^d\to\R$ be a function in $C^1(B_{\eps}(0))$ such that $\tilde{\psi}=\psi$ and $\nabla \tilde{\psi}=\nabla\psi$ in $\overline{\Omega} \cap B_{\eps}(0)$ ---this is possible by Whitney's theorem, see \cite[Theorem]{W34} or \cite[Section 2.5]{BB12}, using that $\partial\Omega$ is of class $C^2$. For $r\in \R$ and $f:U\to \R$, where $U\subset \R^d$ is an arbitrary set, we write
$$
L_r(f):=\{x\in U\;:\; f(x)=r\}.
$$
Note that since $\psi\equiv c$ on $\partial \Omega \cap B_{\eps}(0)$ we have $0\in \partial\Omega\subset L_c(\psi)\subset L_c(\tilde{\psi})$. Since $\tilde{\psi}$ is a $C^1$ function, it holds $\nabla\psi(0)=\nabla\tilde{\psi}(0)=0$ or $\nabla \tilde{\psi}=\nabla\psi$ is orthogonal to the tangent plane at $0$. In the first case, we immediately have $\partial_1\psi(0)=0$ and in the second case, we note that $e_1$ is contained in the tangent plane and thus, again, $\partial_1\psi(0)=\nabla \psi(0)e_1=0$. Thus, it follows
$$
\partial_1\psi(tx_t)=o(1)\quad\text{for $t\to 0^+$,}
$$
and \eqref{estimate to be shown} follows with \eqref{estimate to be shown2}
\end{proof}

For any halfspace $H \subset \R^d$ and any $x \in \R^d$ we denote by
$$
\hat{x}^{\partial H}
$$
the reflection of $x$ with respect to $\partial H$.

\begin{rem}
Lemma \ref{psi_lemma} is very similar to \cite[Lemma 4.3]{FJ2015}. However, in \cite[Lemma 4.3]{FJ2015} the missing assumption that 
\begin{equation}\label{additional missing assumption}
u/\delta^s:\Omega\to\R \quad\text{ has a $C^1$ extension to a function defined on $\overline{\Omega}$}  
\end{equation}
 is necessary to conclude the result. In particular, the main results, Theorem 1.1 and Theorem 1.2 in \cite{FJ2015} need the additional assumption \eqref{additional missing assumption}.  We emphasize that even in the classical overdetermined problem by Serrin \cite{S1971}, that is the case $s=1$, the solution is assumed to be in $C^2(\overline{\Omega})$. Note that the assumption \eqref{additional missing assumption} and a similar remark appear also in \cite{DPTV23}. A slightly adjusted assumption \eqref{additional missing assumption} is also needed in subsequent results to \cite{FJ2015}, for instance, in \cite{SV19}.
\end{rem}

\begin{lemma}
	\label{normal_derivative}
	Let $H=\{x\in \R^d\;:\; x_1>0\}$ and $\hat{H} = \R^d \setminus \overline{H}=\{x\in \R^d\; :\; x_1<0\}$. Let $\Omega \subset \R^d$ be an open, nonempty, connected, bounded set, which has a $C^{2,1/2}$ boundary, $K \subset \Omega \cap \hat{H}$ be an open, nonempty, and bounded set {{which has a $C^{2}$ boundary,}} such that $\dist(K, \partial(\Omega \cap \hat{H})) > 0$. Put $\Omega_- = \Omega \cap \hat{H}$, $\Omega_+ = \Omega \cap H$. We assume that $Q_{0,e_1}(\Omega_+) \subset \Omega_-$ and that there exists $z \in \partial \Omega \cap \partial H$ such that $\partial \Omega$ and $\partial H$ are perpendicular at $z$ (see Figure \ref{fig:normal_derivative_picture_2}). Assume that $u: \R^d \to [0,1]$, {{$u \in H^{1/2}(\R^d)$}},  $u = 1$ on $\overline{K}$, $u = 0$ on $\Omega^c$, $(-\Delta)^{1/2} u = 0$ on $\Omega \setminus \overline{K}$. Then $\Dn  u$ is not constant on $\partial \Omega$.
\end{lemma}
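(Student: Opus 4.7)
The proof proceeds by contradiction in the spirit of a Serrin-type corner-point argument, adapted to the half-Laplacian via Lemmas \ref{psi_lemma} and \ref{cornerpoint}. Suppose that $\Dn u \equiv \lambda$ on $\partial \Omega$ for some $\lambda > 0$. After translating and rotating, we may assume $z = 0$ and the interior unit normal to $\partial \Omega$ at $z$ equals $e_2$, which is consistent with the perpendicularity of $\partial \Omega$ and $\partial H$ at $z$. Set $\bar u := u \circ Q_{0,e_1}$, $v := \bar u - u$, and $\eta := e_1 + e_2$. Since $\dist(K,\partial \Omega_-) > 0$ and $Q_{0,e_1}(0) = 0$, one may choose $\eps > 0$ so small that $B_{2\eps}(0)$ is disjoint from $\overline K \cup \overline{K^*}$, where $K^* := Q_{0,e_1}(K)$; in particular $u$ is half-harmonic in $\Omega \cap B_{2\eps}(0)$.

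\textbf{Upper bound on $v(t\eta)$.} By Lemma \ref{C12} combined with the higher order fractional boundary regularity for half-harmonic functions in $C^{2,1/2}$ domains, on $\overline \Omega \cap B_\eps(0)$ we have a factorization $u(x) = \delta_\Omega^{1/2}(x)\,\psi(x)$ with $\psi \in C^1(\overline{\Omega \cap B_\eps(0)})$, and the assumed constancy of $\Dn u$ forces $\psi \equiv \lambda$ on $\partial \Omega \cap B_\eps(0)$. Lemma \ref{psi_lemma} applied with $s = 1/2$ then yields
$$
v(t\eta) = o(t^{3/2}) \qquad \text{as } t \to 0^+.
$$

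\textbf{Lower bound on $v(t\eta)$.} The function $v$ lies in $H^{1/2}(\R^d) \cap C^{1/2}(\R^d)$ and is antisymmetric across $\partial H$. Moreover, $v \geq 0$ on $H \setminus \Omega_+$ (since $u \equiv 0$ on $H \setminus \Omega$ and $\bar u \geq 0$ everywhere), and $(-\Delta)^{1/2} v \geq 0$ throughout $\Omega_+$: on $\Omega_+ \setminus \overline{K^*}$ both $u$ and $\bar u$ are half-harmonic, whereas on $\mathrm{int}(K^*) \cap \Omega_+$ we have $\bar u \equiv 1$ so $(-\Delta)^{1/2} \bar u \geq 0$ (using $\bar u \leq 1$ globally), while $(-\Delta)^{1/2} u = 0$ because $K \cap \Omega_+ = \emptyset$. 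The inclusion $Q_{0,e_1}(\Omega_+) \subset \Omega_-$ and the strict placement $K \subset \hat H$ rule out $v \equiv 0$ (else $u$ would be symmetric across $\partial H$, forcing the same for $K$). Proposition \ref{prop:hopf2} therefore gives $v > 0$ in $\Omega_+$. Now consider the symmetrized set
$$
W' := \Omega_+ \cup Q_{0,e_1}(\Omega_+) \cup (\Omega \cap \partial H),
$$
which is open and symmetric in $x_1$; the $C^{2,1/2}$-regularity of $\partial \Omega$ together with the perpendicularity of $\partial \Omega$ and $\partial H$ at $0$ ensures that $0 \in \partial W'$ with interior unit normal $e_2$ (the local graph of $\partial \Omega$ has vanishing $x_1$-derivative at $0$, so its even reflection is $C^1$ across $\partial H$). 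Since $W' \cap H = \Omega_+$, Lemma \ref{cornerpoint} delivers constants $C, t_0 > 0$ with
$$
v(t\eta) \geq C t^{3/2} \qquad \text{for all } t \in (0, t_0),
$$
contradicting the preceding upper bound.

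The main technical delicacy lies in the geometric verification that $W'$ has a $C^1$-boundary with interior normal $e_2$ at the corner $z$; this is precisely where the perpendicularity hypothesis is used, guaranteeing that the two one-sided $C^{2,1/2}$-graphs of $\partial \Omega$ meet $\partial H$ with zero slope and thus glue into a $C^1$ graph under reflection. Everything else is bookkeeping with the fractional Hopf lemma, the corner point lemma, and the boundary factorization $u = \delta_\Omega^{1/2}\psi$.
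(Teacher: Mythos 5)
Your overall strategy matches the paper's: assume $\Dn u\equiv c$, get the upper bound $v(t\eta)=o(t^{3/2})$ from the $\delta^{1/2}\psi$ factorization and Lemma \ref{psi_lemma}, then contradict it with a $Ct^{3/2}$ lower bound from Proposition \ref{prop:hopf2} and Lemma \ref{cornerpoint}. But there are two implementation gaps where the paper is more careful, and both come from the fact that you use the ``large'' sets $\Omega_+$ and $W'$ rather than the paper's deliberately smaller ones.

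First, applying Proposition \ref{prop:hopf2} with $W=\Omega_+$ requires $(-\Delta)^{1/2}v\geq 0$ \emph{in the weak sense} throughout $\Omega_+$. You verify a pointwise inequality in the interior of $K^*\cap\Omega_+$ and in $\Omega_+\setminus\overline{K^*}$, but that says nothing about what happens across $\partial K^*\cap\Omega_+$, where $\bar u$ has only $C^{1/2}$ regularity and the distributional half-Laplacian of $\bar u$ is a (capacitary) measure concentrated on $\partial K^*$. That measure is in fact nonnegative, but establishing this requires a separate obstacle/potential-theoretic argument that you do not give. The paper sidesteps the issue entirely by taking $W=\Omega_+\setminus\hat K$ (where $v$ is genuinely half-harmonic), and noting $v>0$ trivially on $\hat K$ for the sign condition on $H\setminus W$.

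Second, you apply Lemma \ref{cornerpoint} to the reflected set $W'=\Omega_+\cup Q_{0,e_1}(\Omega_+)\cup(\Omega\cap\partial H)$, asserting that perpendicularity at $z$ makes its even reflection $C^1$ across $\partial H$. For $d\geq 3$ this is not correct: writing $\partial\Omega$ locally as $x_2=f(x_1,x_3,\dots,x_d)$, perpendicularity at $z=0$ gives only $\partial_1 f(0,0)=0$, whereas the even extension $f(|x_1|,x')$ is $C^1$ near $0$ only if $\partial_1 f(0,x')=0$ for all nearby $x'$ --- i.e.\ perpendicularity along all of $\partial\Omega\cap\partial H$ near $z$, which is not assumed. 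So $W'$ may have a genuine corner at $0$, and one cannot invoke Lemma \ref{cornerpoint} with the constants ``depending only on $W$ and $d$'' in any quantitative way. The paper again avoids the problem by constructing a small auxiliary symmetric $C^2$ set $U\subset\Omega\setminus\overline{K}$ with $0\in\partial U$, $\dist(U,\overline K)>0$ (hence, by symmetry of $U$, also $\dist(U,\hat K)>0$), and applying the corner lemma to $U$; your argument would go through if you replaced $W'$ by such a $U$. (Note that using $U$ also dissolves the first gap, since $v$ is then half-harmonic on $U\cap H$.)
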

\begin{proof}

%

\begin{figure}
\centering
\begin{tikzpicture}[x=1cm,y=1cm,step=1cm]
\node at (0,0) {\includegraphics[width=0.5\textwidth]{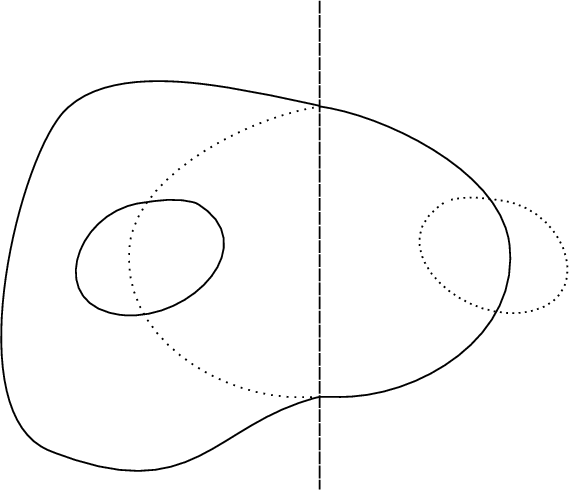}};
\node at (1.5,0.5) {$\Omega_+$};
\node at (-0.3,0.5) {$\hat{\Omega}_+$};
\node at (-2,0) {$K$};
\node at (3,0) {$\hat{K}$};
\node at (-2.8,1.2) {$\Omega_-$};
\node at (-0.1,3) {$\hat{H}$};
\node at (1,3) {$H$};
\fill[black] (0.52,-2.27) circle[radius=0.07]; 
\node at (0.7,-2.6) {$0$};
\draw[->,dashed]  (-5,-2.27) -- (5,-2.27);
\node at (4.8,-2.6) {$x_1$};
\end{tikzpicture}
\caption{Picture for Lemma \ref{normal_derivative}.} 
\label{fig:normal_derivative_picture_2}
\end{figure}

{{The fact that $\Dn  u$ is well defined on $\partial \Omega$ follows from Lemma \ref{C12}.}}
We may assume that $z = 0$.
By contradiction, assume that $\Dn  u$ is constant on $\partial \Omega$, i.e. $\Dn  u(x) = c$ for all $x \in \partial \Omega$ for some $c>0$. Let $\eps > 0$ be such that $\dist(B_{\eps}(0),K) > 0$. Then $u$ clearly satisfies
\begin{equation*}
\left\{
\begin{aligned}
(-\Delta)^{1/2} u(x)&=0 &&\text{for $x \in \Omega \cap B_{\eps}(0)$,}\\
u(x)&=0 &&\text{for $x \in B_{\eps}(0) \setminus \Omega$.}
\end{aligned}
\right.
\end{equation*}
Let $\delta(x)=\dist(x,\R^d\setminus \Omega)$. By \cite[Theorem 1.4]{AR2020} for $x \in \overline{\Omega} \cap B_{\eps}(0)$ we have the representation  $u(x) = \delta^{1/2}(x) \psi(x)$, where $\psi \in C^{1,1/2}(\overline{\Omega} \cap B_{\eps}(0))$. By our assumption that for all $x \in \partial \Omega$ we have $\Dn  u(x) = c$ we obtain that for all $x \in \partial \Omega \cap B_{\eps}(0)$ we have $\psi(x) = c$. 
Put $\eta = e_1 + e_2 = (1,1,0,\ldots,0)$ and $\overline{\eta} = -e_1 + e_2 =  (-1,1,0,\ldots,0)$. By Lemma \ref{psi_lemma} we have
\begin{equation}\label{tocontradict}
u(t\overline{\eta}) - u(t\eta) = o(t^{3/2})\quad\text{as $t\to 0^+$.}
\end{equation}

For $x  \in \R^d$ let $\overline{x} = Q_{0,e_1}(x)$ and put
$$
v(x) = u(\bar{x})-u(x).
$$

Note that $(-\Delta)^{1/2} u(x) = 0$ for $x \in \Omega \setminus \overline{K}$ so $(-\Delta)^{1/2} v(x) = 0$ for $x \in \Omega_+ \setminus \hat{K}$, where $\hat{K}:=Q_{0,e_1}(\overline{K})$. Since $\hat{K} \subset H$ we have $v(x) > 0$ for $x \in \hat{K}$. We also have $v(x) \ge 0$ for $x \in H \setminus \Omega_+$. Hence, by Proposition \ref{prop:hopf2} for $W = \Omega_+ \setminus \hat{K}$, we have $v(x) > 0$ for $x \in \Omega_+ \setminus \hat{K}$.
It is easy to show that there exists an open set $U \subset \Omega \setminus \overline{K}$ such that $0 \in \partial U$ with $C^2$ boundary which is symmetric across $\partial H$. We may assume that $\dist(U,\overline{K}) > 0$. By Lemma \ref{cornerpoint} there exist positive constants $c_0$ and $t_0$ such that for all $t \in (0,t_0)$ we have
\begin{equation*}
v(t\overline{\eta}) = u(t\overline{\eta}) - u(t\eta) \ge c_0 t^{3/2}.
\end{equation*}
This contradicts \eqref{tocontradict}, so ending the proof of the lemma.
\end{proof}

\begin{lemma}\label{moving plane argument}
Assume that a bounded domain $K \subset \R^d$ has a $C^2$ boundary and it is starshaped with respect to some open ball contained in $K$ and let $u=u_{\lambda}$, $\Omega=\Omega_{\lambda}$ be the solution of Problem \ref{bernoulli_problem} given by Theorem \ref{Jarohs_Kulczycki_Salani} for some $\lambda>0$. Let $e\in \partial B_1(0)$ and, for $t\in \R$ let
$$
v_{t}:=u\circ Q_{t,e}-u,
$$
using the notation as in the beginning of Section \ref{preliminearies}. Define $\Omega_t=\Omega\cap H_{t,e}$, $K_t=K\cap H_{t,e}$, and
\begin{align*}
t_1&:=t_1(e):=\sup\{t \in \R\;:\; H_{t,e}\cap \Omega\neq \emptyset\}\quad\text{and}\\
 t_0&:=t_0(e):=\inf\{t\in \R\;:\; Q_{\mu,e_1}(\Omega_{\mu})\subset \Omega\text{ and } Q_{\mu,e_1}(K_{\mu})\subset K\text{ for all $\mu\in(t,t_1)$}\}
\end{align*}
Then the following are true. 
\begin{enumerate}
\item[(i)] It holds $t_0 < t_1<\infty$ and for any $t\in(t_0,t_1)$ we have
\begin{itemize}
\item $v_{t}\in C^{1/2}(\R^d)\cap C^{\infty}(\Omega_{t}\setminus \overline{M_{t}})$, where $M_t:=Q_{t,e}(K\setminus \overline{H_{t,e}})$,
\item $(-\Delta)^{1/2}v_{t}=0$ in $\Omega_{t}\setminus \overline{M_{t}}$, $v_{t}\geq 0$ in $H_{t,e}\setminus (\Omega_{t}\setminus \overline{M_{t}})$, and $v_{t}\circ Q_{t,e_1}=-v_{t}$ in $\R^d$.
\end{itemize}
\item[(ii)] $v_t>0$ in $\Omega_{t}\setminus \overline{M_{t}}$ for any $t\in(t_0,t_1)$ and 
$$
\text{either $v_{t_0}\equiv 0$ in $\R^d$ \quad  or $\quad v_{t_0}>0$ in $\Omega_{t_0}\setminus \overline{M_{t_0}}$.}
$$
\end{enumerate}
\end{lemma}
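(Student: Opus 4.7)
The plan is to verify the hypotheses of the fractional Hopf lemma for antisymmetric functions (Proposition \ref{prop:hopf2}) for $v_t$ on $W_t:=\Omega_t\setminus\overline{M_t}$, and then to exclude the degenerate case $v_t\equiv 0$ for $t\in(t_0,t_1)$ by a short geometric argument.

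For (i), $t_1<\infty$ is immediate from boundedness of $\Omega$. For $t_0<t_1$, I use that $K$ and (by Theorem \ref{Jarohs_Kulczycki_Salani}) $\Omega$ are both starshaped with respect to $B_\rho(x_0)$: a standard consequence is that for every hyperplane $\partial H_{t,e}$ not meeting $B_\rho(x_0)$ (so for $t\geq x_0\cdot e+\rho$) the reflection inclusions $Q_{t,e}(K_t)\subset K$ and $Q_{t,e}(\Omega_t)\subset\Omega$ hold, whence $t_0\leq x_0\cdot e+\rho<t_1$. Regularity comes from Lemma \ref{C12}, giving $u,v_t\in H^{1/2}(\R^d)\cap C^{1/2}(\R^d)$, together with standard interior smoothness of fractional harmonic functions on $W_t$. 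Antisymmetry $v_t\circ Q_{t,e}=-v_t$ is immediate.

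For the equation, I would use the reflection-invariance $(-\Delta)^{1/2}(u\circ Q_{t,e})=((-\Delta)^{1/2}u)\circ Q_{t,e}$ and check that both $x$ and $Q_{t,e}(x)$ lie in $\Omega\setminus\overline{K}$ whenever $x\in W_t$. The crucial observation is $\overline{K_t}\subset\overline{M_t}$: applying $Q_{t,e}$ to $Q_{t,e}(K_t)\subset K$, and noting $Q_{t,e}(K_t)\subset\R^d\setminus\overline{H_{t,e}}$, gives $K_t\subset Q_{t,e}(K\setminus\overline{H_{t,e}})=M_t$. Combined with $Q_{t,e}(\Omega_t)\subset\Omega$, this yields both containments. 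The sign condition $v_t\geq 0$ on $H_{t,e}\setminus W_t$ splits into two easy cases: on $H_{t,e}\setminus\Omega_t$ one has $u(x)=0\leq u(Q_{t,e}(x))$, and on $\overline{M_t}$ one has $u(Q_{t,e}(x))=1\geq u(x)$. This completes (i) and sets up the Hopf lemma.

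For (ii), Proposition \ref{prop:hopf2} applied on $W_t$ directly gives the dichotomy $v_t\equiv 0$ in $\R^d$ or $v_t>0$ in $W_t$, which is precisely the statement at $t=t_0$. The main obstacle is to exclude $v_t\equiv 0$ for $t\in(t_0,t_1)$: if $v_t\equiv 0$, then $u$, and hence $K$, is symmetric across $\partial H_{t,e}$. If $K\cap H_{t,e}=\emptyset$, this symmetry forces $K\subset\partial H_{t,e}$, and since $K$ is open in $\R^d$ this gives $K=\emptyset$, contradicting our assumptions. Otherwise, set $T:=\sup_{y\in\overline{K}}y\cdot e>t$ and choose $\epsilon\in(0,(T-t)/2)$ with $t-\epsilon\in(t_0,t_1)$. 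The definition of $t_0$ gives $Q_{t-\epsilon,e}(K_{t-\epsilon})\subset K$, and the direct computation $Q_{t,e}\circ Q_{t-\epsilon,e}(x)=x+2\epsilon e$ combined with $Q_{t,e}(K)=K$ turns this into the translation inclusion $K_{t-\epsilon}+2\epsilon e\subset K$. Picking $y\in K$ with $y\cdot e>T-\epsilon$ yields $y+2\epsilon e\in K$ with $(y+2\epsilon e)\cdot e>T+\epsilon>T$, contradicting the definition of $T$. Hence $v_t\not\equiv 0$ and so $v_t>0$ in $W_t$.
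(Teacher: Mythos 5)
Your overall structure matches the paper's proof (verify the hypotheses of Proposition~\ref{prop:hopf2} for $v_t$ on $W_t=\Omega_t\setminus\overline{M_t}$, then invoke it), and most verifications are correct. In particular, your deduction $\overline{K_t}\subset\overline{M_t}$ from $Q_{t,e}(K_t)\subset K\setminus\overline{H_{t,e}}$ is clean, and the case split for the sign condition $v_t\geq 0$ on $H_{t,e}\setminus W_t$ is exactly right. Your treatment of part~(ii) is actually \emph{more} detailed than the paper's one-line assertion that ``$v_t\equiv 0$ is not possible for $t>t_0$ since $u>0$ in $\Omega$ and $u=1$ in $K$'': the translation identity $Q_{t,e}\circ Q_{t-\eps,e}=\cdot+2\eps e$, the inclusion $K_{t-\eps}+2\eps e\subset K$, and the contradiction with $T=\sup_{\overline K} y\cdot e$ give a precise and correct proof of that assertion.

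There is, however, a genuine gap in your argument for $t_0<t_1$. The claimed ``standard consequence'' of starshapedness — that if $\partial H_{t,e}$ does not meet $B_\rho(x_0)$, i.e.\ $t\geq x_0\cdot e+\rho$, then $Q_{t,e}(A\cap H_{t,e})\subset A$ for any domain $A$ starshaped with respect to $B_\rho(x_0)$ — is \emph{false}. Starshapedness with respect to a ball guarantees that each ray from $B_\rho(x_0)$ stays inside, but says nothing about whether the \emph{orthogonal} reflection $Q_{t,e}(y)$ of a point $y\in A\cap H_{t,e}$ lies on any such ray. Concretely, take a smooth convex domain $A$ close to the triangle $\{(x_1,x_2):0<x_2<x_1<1\}$; it is starshaped (even convex) with respect to $B_{0.01}\big((0.7,0.2)\big)$. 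With $e=e_2$ and $t=0.3$ we have $t>x_0\cdot e+\rho=0.21$, yet the point $(0.95,0.9)\in A\cap H_{t,e_2}$ reflects to $(0.95,-0.3)\notin\overline A$. In fact the critical reflection threshold for this example is near $t=1/2$. So your bound $t_0\leq x_0\cdot e+\rho$ does not hold, and the conclusion $t_0<t_1$ cannot be obtained this way. The paper's cited ingredient is really the boundedness and $C^\infty$ regularity of $\Omega$ from Theorem~\ref{Jarohs_Kulczycki_Salani}: near $t_1$ the cap $\Omega_t$ lies inside the interior tangent ball at the extreme boundary point, so the reflection stays in $\Omega$, and since $\overline K\subset\Omega$ one has $K_t=\emptyset$ for $t$ close to $t_1$. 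That is the correct (and starshapedness-independent) reason why $t_0<t_1$.
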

\begin{proof}
Recall that $u$ belongs to the energy space $H^{1/2}(\R^d)$ as mentioned above. Note that by regularity theory, it follows that $u=u_{\lambda}\in C^{1/2}(\R^d)$, see e.g. \cite[Theorem 1.4]{AR2020} for the regularity at $\partial \Omega$ and the regularity at $\partial K$ follows since this boundary is of class $C^2$ and by considering the problem solved by $1-u$ (in a localized sense).\\
We show the claim with the moving plane method. For this, let $e\in \partial B_1(0)$ and consider, for $t\in \R$ the hyperplane $H_{t}=\{x\in \R^d\;:\; x\cdot e>t\}$. By rotation we may assume that $e=e_1=(1,0,\ldots,0)$. We emphasize the following four situations that may occur for some $t$:
\begin{enumerate}
 	\item The boundary of $Q_{t,e_1}(\Omega_{t})$ touches the boundary of $\Omega$ in $\{x\in \R^d\;:\; x_1<t\}$ at some point $z$;
 	\item $\partial H_{t}$ is perpendicular to $\partial \Omega$ at some point $z \in \partial \Omega$;
 	\item The boundary of $Q_{t,e_1}(K_{t})$ touches the boundary of $K$ in $\{x\in \R^d\;:\; x_1<t\}$ at some point $z$;
 	\item $\partial H_{t}$ is perpendicular to $\partial K$ at some point $z \in \partial K$.
 \end{enumerate} 
 For (i), first note that $t_0 < t_1<\infty$ follows immediately by Theorem \ref{Jarohs_Kulczycki_Salani}. Next note that at $t_0$ at least one of the above cases 1.--4. is true.
 Notice that $K_{t}\subset M_{t}$ for $t\geq t_{0}$ so that $v_{t}\geq 0$ in $\overline{M_{t}}$. Moreover, $v_{t}\geq 0$ in $H_{t}\setminus \Omega_{t}$ since $u\geq 0$ in $\R^d$. Finally, notice that $v_{t}\in C^{1/2}(\R^d)\cap C^{\infty}(\Omega_{t}\setminus \overline{M_{t}})$ and this function satisfies
$$
(-\Delta)^{1/2}v_{t}=0\quad\text{in $\Omega_{t}\setminus \overline{M_{t}}$,}\quad v_{t}\geq 0\quad\text{in $H_{t}\setminus (\Omega_{t}\setminus \overline{M_{t}}),\ $ and}\quad v_{t}\circ Q_{t,e_1}=-v_{t}\quad\text{in $\R^d$}
$$
for any $t\in[t_0,t_1)$. This shows (i).\\
For (ii), we stress that $v_{t}\equiv0$ is not possible for any $t>t_0$, since $u>0$ in $\Omega$ and $u=1$ in $K$. Proposition \ref{prop:hopf2} thus entails that for all $t\in(t_0,t_1)$ we have $v_{t}>0$ in $\Omega_{t}\setminus \overline{M_{t}}$ and we have either
\begin{equation}\label{eq:moving plane argument}
v_{t_0}\equiv 0\quad \text{in $\R^d$} \quad \text{or}\quad v_{t_0}>0\quad\text{in $\Omega_{t}\setminus \overline{M_{t}}$.}
\end{equation}
This finishes the proof.
\end{proof}

\begin{proof}[Proof of Theorem \ref{inward_normal_ray}]
Fix $\lambda > 0$ and abbreviate $\Omega = \Omega_{\lambda}$ and $u=u_{\lambda}$. Recall that $u$ belongs to the energy space $H^{1/2}(\R^d)\cap C^{1/2}(\R^d)$ as mentioned in the proof of Lemma \ref{moving plane argument}.

By contradiction, assume that there is $x^\ast\in \partial \Omega$ such that the inward normal ray to $\partial \Omega$ at $x^\ast$ does not meet $\textrm{conv}(\overline{K})$ (see Figure \ref{fig:contradiction}).

Then, there exists a $(d-1)$-dimensional hyperplane $T^\ast$ containing this inward normal ray such that $\textrm{conv}(\overline{K}) \cap T^\ast = \emptyset$. 
Clearly, $T^\ast$ is orthogonal to $\partial \Omega$ at $x^\ast$.

\begin{figure}
\centering
\begin{tikzpicture}[x=1cm,y=1cm,step=1cm]
\node at (0,0) {\includegraphics[width=0.5\textwidth]{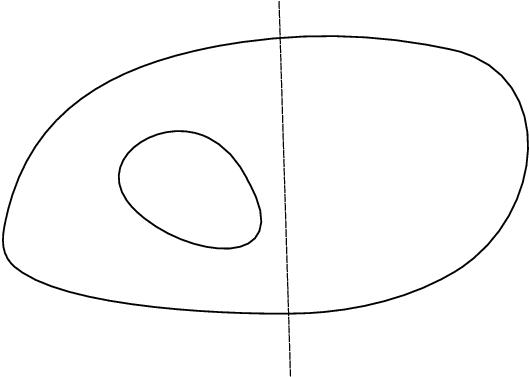}};
\node at (-1,1.5) {$\Omega$};
\node at (1.5,0.5) {$\Omega_{t^{\ast}}$};
\node at (1.3,-2.2) {$x^{\ast}\in \partial \Omega$};
\node at (-1.8,0) {$K$};
\node at (0.65,2.8) {$T^{\ast}$};
\fill[black] (0.385,-2.01) circle[radius=0.07]; 
\draw[line width=0.4pt] (0.781,-2.01) arc (2:87:0.44);
\fill[black] (0.56,-1.83) circle[radius=0.03]; 
\end{tikzpicture}
\caption{What happens when the inward normal ray to $\partial \Omega$ at $x_0$ does not meet $\overline{K}$.} 
\label{fig:contradiction}
\end{figure}

By translation and rotation, we can assume $x^\ast=0$, $T^\ast=\{x=(x_1,\dots,x_d)\in \R^d\;:\; x_1=0\}$, and for all $x\in \textrm{conv}(\overline{K})$ it holds $x_1<0$.

Next, for $t\in \R$ let $H_t=\{x\in \R^d\;:\; x_1>t\}$ and let $v_t=u\circ Q_{t,e_1}-u$. 
Moreover, let $t_0,t_1$ be as stated in Lemma \ref{moving plane argument} and let
$t^\ast$ be the first $t$ such that one of the cases 1.--4. in the proof of Lemma \ref{moving plane argument} happens. Then, $t^\ast\geq 0$ by our assumption and $t^\ast\geq t_0$ by definition.  If $t^\ast>t_0$, then $v_{t^\ast}>0$ in $\Omega_{t^\ast}\setminus \overline{M_{t^\ast}}$
by (ii) of Lemma \ref{moving plane argument}. If $t^\ast=t_0$, then \eqref{eq:moving plane argument} holds. But $v_{t^\ast}\equiv 0$ is not possible, since $K\subset (H_{t^\ast})^c$ and $u=1$ in $K$, while $u<1$ in $(\overline{K})^c$. This implies again $v_{t^\ast}>0$ in $\Omega_{t^\ast}\setminus \overline{M_{t^\ast}}$.

Now, notice that our assumption implies $t^\ast\geq 0$ and that one of the cases 1. or 2. is happening at $t^\ast$.

If we are in case 1: Then there is an internal ball $B\subset \Omega_{t^\ast}\setminus \overline{M_{t^\ast}}$ such that $\overline{B}\cap \partial \Omega=\{z\}$. Proposition \ref{prop:hopf2} applied to $v_{t^\ast}$ in $W=\Omega_{t^\ast}\setminus \overline{M_{t^\ast}}$ implies 
$$
0< D_{\Omega_{t^\ast}}^{1/2} v_{t^\ast}(z)= D_{Q_{t^\ast,e_1}(\Omega_{t^\ast})}^{1/2} u(Q_{t^\ast,e_1}(z))- D_{\Omega_{t^\ast}}^{1/2}u(z)=\lambda-\lambda=0\,,
$$
a contradiction!

If we are in case 2: Then, by translation if necessary, we may assume $t^\ast=0$ and apply Lemma \ref{normal_derivative} to $u$. But this leads again to a contradiction, since $\Dn u$ is constant on $\partial \Omega$.\\
This finishes the proof.
\end{proof}
\medskip

\noindent{\large{\bf{\em Acknowledgements}}}
The authors would like to thank an unknown referee for instructive comments which led to a substantial improvement of the paper.
The second author  was supported by the National Science Centre, Poland, grant no. 
2019/33/B/ST1/02494.
The third author was supported in part by INdAM through a GNAMPA Project and by 
the project "Geometric-Analytic Methods for PDEs and Applications (GAMPA)", funded by European Union --Next Generation EU  within the PRIN 2022 program 
(D.D. 104 - 02/02/2022 Ministero dell'Universit\`a e della Ricerca).

\footnotesize


\begin{thebibliography}{99}
\bibliographystyle{plain}

\bibitem{AR2020} N. Abatangelo, X. Ros-Oton, \emph{Obstacle problems for integro-differential operators: higher regularity of free boundaries}, Adv. Math. 360 (2020), 106931, 61 pp.

\bibitem{A1989} A. Acker, \emph{On the qualitative theory of parametrized families of free boundaries}, J. Reine Angew. Math. 393 (1989), 134-167.

\bibitem{AM1995} A. Acker, R. Meyer, {\em A Free Boundary Problem for the $p$-Laplacian: Uniqueness, Convexity, and Successive Approximation of Solutions}, Electron. J. Differ. Equ. No. 08 (1995), 1-20.

\bibitem{AP2012} M Allen, A Petrosyan, {\em A two-phase problem with a lower-dimensional free boundary}, Interfaces and Free Boundaries 14 (2012), 307-342.

\bibitem{B1957} A. Beurling, {\em On free boundary problems for the laplace equation}, Seminars on analytic functions I, Institute Advanced Studies Seminars, Princeton (1957), 248-263.

\bibitem{BB12} A.~Brudnyi, Y.~Brudnyi, \emph{Methods of geometric analysis in extension and trace problems. Volume 1}, Monographs in Mathematics, 102, Birkh\"auser/Springer Basel AG, Basel, 2012. 

\bibitem{CRS2010} L. A. Caffarelli, J.-M. Roquejoffre, Y. Sire, \emph{Variational problems with free boundaries for the fractional Laplacian} J. Eur. Math. Soc. 12, No. 5 (2010), 1151-1179.

\bibitem{SR2012} D. De Silva, J. M. Roquejoffre, {\em Regularity in a one-phase free boundary problem for the fractional Laplacian}, Ann. Inst. H. Poincare  Anal.  Non  Lineaire 29, No. 3 (2012), 335-367.

\bibitem{DSS2015} D. De Silva, O. Savin, {\em Regularity of {L}ipschitz free boundaries for the thin one-phase problem}, J. Eur. Math. Soc. (JEMS) 17, No. 6 (2015), 1293-1326.

\bibitem{DSS2015b} D. De Silva, O. Savin, {\em {$C^\infty$} regularity of certain thin free boundaries}, Indiana Univ. Math. J. 64, No. 5 (2015), 1575-1608.

\bibitem{SSS2014} D. De Silva, O. Savin, Y. Sire, {\em A  one-phase  problem  for the  fractional  Laplacian:  regularity  of  flat  free boundaries}, Bull. Inst. Math. Acad. Sin. (N.S.) 9, No. 1 (2014), 111-145.

\bibitem{DPTV23} S. Dipierro, G. Poggesi, J. Thompson, E. Valdinoci, {\em Quantitative stability for the nonlocal overdetermined Serrin problem}, preprint available on arXiv, 2023, arXiv:2309.17119v1.

\bibitem{DSV2015}
S Dipierro, O Savin, E Valdinoci, {\em A nonlocal free boundary problem}, SIAM J. Math. Anal. 47(6), (2015) 4559-4605.

\bibitem{EFY2023} M Engelstein, X Fern{\'a}ndez-Real, H Yu, {\em Graphical solutions to one-phase free boundary problems},  J. Reine Angew. Math. vol. 2023, no. 804, (2023) 155-195.

\bibitem{EKPSS2019} M. Engelstein, A. Kauranen, M. Prats, G. Sakellaris, Y. Sire, {\em Minimizers for the thin one-phase free boundary problem},  Comm. Pure Appl. Math. 74 (2021), no. 9, 1971-2022.

\bibitem{FJ2015} M.~M. Fall, S. Jarohs, {\em Overdetermined problems with fractional Laplacian}, ESAIM Control Optim. Calc. Var. 21.4 (2015), 924-938.

\bibitem{FR2024} X. Fern{\'a}ndez-Real , X. Ros-Oton {\em Stable cones in the thin one-phase problem}, Amer. J. Math. 146(3) (2024), 631-685.

\bibitem{FR1997} M. Flucher, M. Rumpf, {\em Bernoulli's free-boundary problem, qualitative theory and numerical approximation}, J. Reine Angew. Math. 486 (1997), 165-204.

\bibitem{JKS2019} S. Jarohs, T. Kulczycki, P. Salani, \emph{Starshapedness of the superlevel sets of solutions to equations involving the fractional Laplacian in starshaped rings}, Math. Nach. 292(5) (2019), pp. 1008-1021.

\bibitem{JKS2022} S. Jarohs, T. Kulczycki, P. Salani, \emph{On the Bernoulli free boundary problems for the half Laplacian and for the spectral half Laplacian}, Nonlinear Anal. 222 (2022), Paper No. 112956, 39 pp.

\bibitem{KW2023} T. Kulczycki, J. Wszo{\l{}}a, {\em On the interior Bernoulli free boundary problem for the fractional Laplacian on an interval}, Collect. Math. (2023).

\bibitem{RS2014} X. Ros-Oton, J. Serra, \emph{The Dirichlet problem for the fractional Laplacian: Regularity up to the boundary}, Journal de Math{\'e}matiques Pures et Appliqu{\'e}es 101(3) (2014), 275-302.

\bibitem{RW2024a} X Ros-Oton, M Weidner, {\em Improvement of flatness for nonlocal free boundary problems}, preprint available on arXiv, 2024, arXiv:2404.15975.

\bibitem{RW2024b} X Ros-Oton, M Weidner, {\em Optimal regularity for nonlocal elliptic equations and free boundary problems}, preprint available on arXiv, 2024, arXiv:2403.07793.

\bibitem{S1971} J. Serrin, \emph{A symmetry problem in potential theory}, Arch. Rational Mech. Anal. 43 (1971), 304-318.

\bibitem{SV19} N. Soave, E. Valdinoci, \emph{Overdetermined problems for the fractional {Laplacian} in exterior and annular sets}, J. Anal. Math. 137.1 (2019), 101-134.

\bibitem{S1994} H. Shahgholian, \emph{Quadrature surfaces as free boundaries}, Ark. Mat. 32 (1994), no. 2, 475-492.

\bibitem{ST2024} S. Snelson, E. V. Teixeira, {\em Regularity and nondegeneracy for nonlocal Bernoulli problems with variable kernels}, preprint available on arXiv, 2024, arXiv:2403.11937


\bibitem{T1975} D. E. Tepper, \emph{On a free boundary problem, the starlike case}, SIAM J. Math. Anal. 6 (1975), 503-505.

\bibitem{W34} H.~Whitney, \emph{Functions differentiable on the boundaries of regions}, Ann. of Math. (2) 35 (1934), no. 3, 482-485.
 
\end{thebibliography}
\end{document}